\begin{document}

\newtheorem{theorem}{Theorem}
\newtheorem{proposition}[theorem]{Proposition}
\newtheorem{remark}[theorem]{Remark}
\newtheorem{lemma}[theorem]{Lemma}
\newtheorem{corollary}[theorem]{Corollary}
\newtheorem{definition}[theorem]{Definition}

\newcommand{\R}{\mathbb{R}}
\newcommand{\N}{\mathbb{N}}
\newcommand{\PP}{\mathbb{P}}
\newcommand{\EE}{\mathbb{E}}
\newcommand{\E}{\mathbb{E}}
\newcommand{\wn}{\widehat{\nabla}}
\newcommand{\eps}{\varepsilon}

\newcommand{\dps}{\displaystyle}
\newcommand{\sgn}{\text{sgn}}

\title{Pathwise estimates for an effective dynamics}
\author{F. Legoll$^{1,4}$, T. Lelièvre$^{2,4}$ and S. Olla$^3$\\
{\footnotesize $^1$ Laboratoire Navier, \'Ecole des Ponts ParisTech, Universit\'e Paris-Est,}\\
{\footnotesize 6 et 8 avenue Blaise Pascal, 77455 Marne-La-Vall\'ee Cedex 2, France}\\
{\footnotesize \tt legoll@lami.enpc.fr}\\
{\footnotesize $^2$ CERMICS, \'Ecole des Ponts ParisTech, Universit\'e Paris-Est,}\\
{\footnotesize 6 et 8 avenue Blaise Pascal, 77455 Marne-La-Vall\'ee Cedex 2, France}\\
{\footnotesize \tt lelievre@cermics.enpc.fr}\\
{\footnotesize $^3$ CEREMADE, UMR-CNRS 7534, Université de Paris Dauphine, PSL Research University}\\
{\footnotesize Place du Maréchal De Lattre De Tassigny, 75775 Paris Cedex 16, France}\\
{\footnotesize \tt olla@ceremade.dauphine.fr}\\
{\footnotesize $^4$ INRIA Paris, MATHERIALS project-team,}\\
{\footnotesize 2 rue Simone Iff, CS 42112, 75589 Paris Cedex 12, France}
}
\date{\today}

\maketitle

\begin{abstract}
Starting from the overdamped Langevin dynamics in $\R^n$,
$$
dX_t = -\nabla V(X_t) dt + \sqrt{2 \beta^{-1}} dW_t,
$$
we consider a scalar Markov process $\xi_t$ which approximates the dynamics of the first component $X^1_t$. In the previous work~\cite{legoll-lelievre-10}, the fact that $(\xi_t)_{t \ge 0}$ is a good approximation of $(X^1_t)_{t \ge 0}$ is proven in terms of time marginals, under assumptions quantifying the timescale separation between the first component and the other components of $X_t$. Here, we prove an upper bound on the trajectorial error $\dps \EE \left( \sup_{0 \leq t \leq T} \left| X^1_t - \xi_t \right| \right)$, for any $T > 0$, under a similar set of assumptions. We also show that the technique of proof can be used to obtain quantitative averaging results.
\end{abstract}

\section{Introduction}


Coarse-graining techniques are fundamental tools in computational statistical physics problems. They are very important for modelling questions (to get some insight on a complicated high-dimensional problem, by reducing it to a low-dimensional model) and for numerical algorithms, which very often use coarse-grained descriptions as predictors to speed up the computations.

In this work, we are interested in getting a low-dimensional Markov dynamics on a few degrees of freedom starting from a high-dimensional Markov dynamics. More specifically, we consider a stochastic process $(X_t)_{t \ge 0}$ on $\R^n$ which follows the overdamped Langevin dynamics:
\begin{equation}
\label{eq:X}
dX_t = - \nabla V(X_t) \, dt + \sqrt{2 \beta^{-1}} \, d W_t,
\end{equation}
where $\beta^{-1}$ is proportional to the temperature, $W_t$ is a standard $n$-dimensional Brownian motion and $V : \R^n \to \R$ is a smooth function. This dynamics is often used in molecular dynamics simulation. Under suitable assumptions on~$V$, this dynamics is ergodic with respect to the Boltzmann-Gibbs measure
$$
d\mu = \psi(x) \, dx
$$
with
$$
\psi(x)
=
Z^{-1} \exp(-\beta V(x)),
\quad
Z=\int_{\R^n} \exp(-\beta V(x)) \, dx,
$$
where $Z$ is assumed to be finite. Suppose that we are not interested in the dynamics of $(X_t)_{t \ge 0}$, but only in the dynamics of its first component, $(X^1_t)_{t \ge 0}$ (see Remark~\ref{rem:xi} below for a discussion of more general cases). In view of~\eqref{eq:X}, we have
\begin{equation}
\label{eq:X1}
dX^1_t = - \partial_1 V(X_t) \, dt + \sqrt{2 \beta^{-1}} \, d W^1_t.
\end{equation}
In general, this dynamics is not closed in $(X^1_t)_{t \ge 0}$, as the right-hand side depends on $X_t$ and not only on $X^1_t$. 

To obtain a closed (Markov) dynamics, a natural idea inspired by~\cite{gyongy-86} is to replace the drift term in~\eqref{eq:X1} by its expectation with respect to $\mu$, conditionally to the value of $X^1_t$. We refer to~\cite{legoll-lelievre-10,legoll-lelievre-12} for a motivation using probabilistic arguments, and to~\cite{e-vanden-eijnden-04,maragliano-fischer-vanden-einjden-ciccotti-06} for other derivations, based in particular on the so-called Mori-Zwanzig projection operator approach~\cite{givon-kupferman-stuart-04}. Roughly speaking, such an approximation requires the first component $X^1_t$ to ``move slowly'' compared to the other components $(X^2_t, \ldots, X^n_t)$, so that these components reach equilibrium before $X^1_t$ has moved. The aim of the analysis performed here and in the previous works~\cite{legoll-lelievre-10,legoll-lelievre-12} is to give a precise mathematical content to this intuition.

For any $\xi \in \R$, we hence introduce
\begin{align}
\label{eq:def_b}
b(\xi) 
&=
\EE_\mu\Big( \partial_1 V(X) \, \Big| \, X^1 = \xi \Big)
=
\int_{\R^{n-1}} \partial_1 V(\xi,x_2^n) \ \psi^\xi(x_2^n) \, dx_2^n
\end{align}
with
\begin{equation}
\label{eq:def_psixi}
\psi^\xi(x_2^n) 
=
\frac{\psi(\xi,x_2^n) 
}{
\dps \int_{\R^{n-1}} \psi(\xi,x_2^n) 
\, dx_2^n
},
\end{equation}
where we have used the notation
$$
x_2^n = (x^2,\dots,x^n), \quad dx_2^n = dx^2 \dots dx^n.
$$
Note that $\psi^\xi(x_2^n) \, dx_2^n$ is a conditional probability measure, namely the probability measure $\mu$ conditioned to $X^1=\xi$. In the following, we also need the marginal probability density $\varphi:\R \to \R$ of $\psi$ along the first coordinate:
\begin{equation}
\label{eq:def_varphi}
\varphi(\xi) = \int_{\R^{n-1}} \psi(\xi,x_2^n) \, dx_2^n.
\end{equation}

The function $b$ (or, depending on the authors, $-b$) is the so-called {\em mean force} associated to the measure $\mu$ and the {\em reaction coordinate} $(x^1,x_2^n) \mapsto x^1$, see e.g.~\cite{lelievre-rousset-stoltz-book-10}. It is the derivative of the so-called {\em free energy} $F(\xi)=-\beta^{-1} \ln \varphi(\xi)$ associated to $\mu$ and the reaction coordinate $(x^1,x_2^n) \mapsto x^1$:
$
b(\xi) = F'(\xi).
$

Replacing the drift term in~\eqref{eq:X1} by its conditional expectation, we introduce the following dynamics, which hopefully is a good approximation of~\eqref{eq:X1}:
\begin{equation}
\label{eq:dyn_eff}
\left\{
\begin{aligned}
d\xi_t &= - b(\xi_t) \, dt + \sqrt{2 \beta^{-1}} \, dW^1_t,\\
\xi_{0} &= X^1_0.
\end{aligned}
\right.
\end{equation}
This is a closed dynamics in $\xi_t$, and $(\xi_t)_{t \geq 0}$ is a Markov process. The mathematical question is now to estimate, in some sense to be made precise, the distance between $\xi_t$ solution to~\eqref{eq:dyn_eff} and $X^1_t$ which satisfies~\eqref{eq:X1}--\eqref{eq:X}. By construction, the effective dynamics~\eqref{eq:dyn_eff} has the correct stationary state: it is ergodic with respect to $\varphi(\xi)\,d\xi$, which is precisely the law of $X^1_t$ in the longtime limit. The question we address here concerns the correctness of the {\em dynamics}. This is motivated in particular by current practices in the field of molecular dynamics, where practitioners derive from~\eqref{eq:dyn_eff} some transition times by looking at free energy differences and using the Eyring-Kramers law (see for example~\cite{hanggi-talkner-barkovec-90} for a review).

\medskip

As a first step, in~\cite{legoll-lelievre-10}, estimates on the distance (in total variation norm) between the law at time $t$ of $X^1_t$ and the law at time $t$ of $\xi_t$ have been obtained. These are therefore estimates on the distance between the marginals in time. More precisely, under the two following assumptions:
\begin{itemize}
\item[{\bf[A1]}] The conditional probability measures $\psi^\xi(x_2^n) \, dx_2^n$ satisfy a Logarithmic Sobolev Inequality with a constant $\rho$ which does not depend on~$\xi$,
\item[{\bf [A2]}] The so-called coupling constant $\kappa_\infty$ is finite: 
$$
\kappa_\infty = \|\partial_1 \wn V \|_{L^\infty(\R^n)} < \infty,
$$
where $\wn V = (\partial_2 V, \ldots, \partial_n V)$,
\end{itemize}
we have shown in~\cite{legoll-lelievre-10} (see also~\cite{legoll-lelievre-12} for a simple case) that, for all times $t \ge 0$,
\begin{equation}
\label{eq:non}
H \Big( {\mathcal L}(X^1_t) \, \Big| \, {\mathcal L}(\xi_t) \Big) 
\le 
\frac{\beta^2 \kappa_\infty^2}{4 \rho^2} \left[
H \Big({\mathcal L}(X_0) \, \Big| \, \mu \Big) - 
H \Big({\mathcal L}(X_t) \, \Big| \, \mu \Big) 
\right].
\end{equation}
Here, $\dps H \Big({\mathcal L}(X^1_t) \, \Big| \, {\mathcal L}(\xi_t)
\Big)=\int_{\R} \ln\left( \frac{d {\mathcal L}(X^1_t)}{ d {\mathcal L}(\xi_t)} \right) d {\mathcal L}(X^1_t)$ 
denotes the relative entropy of the law of $X^1_t$ with respect to the law of $\xi_t$, which is for example an upper bound on the square distance between the two laws in total variation norm. Provided that $\rho$ is large, we hence see that, in terms of laws at any time $t$, $\xi_t$ is an accurate approximation of $X^1_t$. The assumption that $\rho$ is large formalizes the fact that mixing with respect to the probability measures $\psi^\xi(x_2^n) \, dx_2^n$ is fast, and hence, as pointed out above, that the components $(X^2_t, \ldots, X^n_t)$ quickly reach equilibrium.

As a side remark, let us mention that the set of assumptions [A1]--[A2] appears to be very useful to analyze metastable processes and coarse-graining techniques in many contexts (see e.g.~\cite{grunewald-otto-villani-westdickenberg-09,lelievre-09,lelievre-rousset-stoltz-08}).

\medskip

In this work, we go further and obtain estimates on some distance between {\em the trajectories} $(X^1_t)_{t \ge 0}$ and $(\xi_t)_{t \ge 0}$ (and not only the laws of $X^1_t$ and $\xi_t$ at any time $t$), under assumptions very similar to Assumptions [A1]--[A2] (see Proposition~\ref{prop:main_global} below). The main difference is that we obtain a result over finite time intervals, whereas~\eqref{eq:non} is a uniform in time estimate. Getting some trajectorial estimates is crucial since, in molecular dynamics, many quantities of interest are indeed trajectorial ones (such as autocorrelation in time of some observables, for example).

\medskip

The article is organized as follows. In Section~\ref{sec:not}, we introduce some notation, the assumptions under which we work, and our main result (Proposition~\ref{prop:main_global}). This result is based on three ingredients:
\begin{itemize}
\item first, an estimate on $ \EE_{\mu}( f^2 )$ where $f=b - \partial_1 V$, which is a direct consequence of two assumptions similar to the assumptions [A1] and [A2] above, see Section~\ref{sec:f};
\item second, the introduction of a Poisson equation and the use of an argument due to T.~Lyons and T.~Zhang in~\cite{lyons-zhang-94} to get an estimate on $\dps \EE_{\mu} \left( \sup_{0 \leq t \leq T} \left|\int_0^t f(X_s) ds \right|^2 \right)$, see Section~\ref{sec:LZ};
\item third, a Gronwall type argument to deduce from this estimate a bound on $\dps \EE \left( \sup_{0 \leq t \leq T} \left| X^1_t - \xi_t \right| \right)$. This argument requires some Lipschitz type assumptions on $b$, see Section~\ref{sec:gronwall}.
\end{itemize} 
Section~\ref{sec:conc} collects all these results to conclude the proof of Proposition~\ref{prop:main_global}. Finally, as an application of the mathematical techniques used to prove Proposition~\ref{prop:main_global}, we provide in Section~\ref{sec:homog} a quantitative averaging result, which is to the best of our knowledge new since it does not require the effective drift $b$ to be Lipschitz.

\begin{remark}
\label{rem:xi}
We consider here for simplicity the case when the degree of freedom of interest is one of the cartesian coordinate of $X_t$. This could be generalized in two directions. First, one could consider a more general function of $X_t$, say $\theta(X_t)$, where $\theta: \R^n \to \R$. This is actually the setting of the previous work~\cite{legoll-lelievre-10}. One could also consider higher dimensional settings, where $\theta$ takes its values in $\R^d$ with $d \ge 2$. We do not pursue along these directions here, in order to keep the presentation simple.
\end{remark}


\section{Notation, assumptions and main result}
\label{sec:not}

\subsection{Notation}

Let us introduce the operator $L$, defined by: for any function $v:\R^n \to \R$,
\begin{align}
\nonumber
L v &= 
- \nabla V \cdot \nabla v 
+ \beta^{-1} \Delta v
\\
\label{eq:def_L}
&= - \sum_{i=1}^n \partial_i V \, \partial_i v
+ \beta^{-1} \sum_{i=1}^n \partial_{ii} v.
\end{align}
We also need the family of operators $L^\xi$ indexed by $\xi \in \R$ and defined by: for any function $v:\R^n \to \R$,
\begin{align}
\nonumber
(L^\xi v)(\xi,x_2^n) & =
- \wn V(\xi,x_2^n) \cdot \wn v(\xi,x_2^n)
+ \beta^{-1} \widehat{\Delta} v(\xi,x_2^n)
\\ 
\label{eq:def_Lxi}
& = 
- \sum_{i=2}^n \partial_i V(\xi,x_2^n) \,
\partial_i v(\xi,x_2^n)
+ \beta^{-1} \sum_{i=2}^n \partial_{ii} v(\xi,x_2^n),
\end{align}
where we used the notation
$$
\widehat{\nabla} v = (\partial_2 v, \ldots, \partial_n v)
\ \ \text{ and } \ \ 
\widehat{\Delta} v = \sum_{i=2}^n \partial_{ii} v.
$$
Note that the sums in~\eqref{eq:def_Lxi} start at $i=2$, in contrast with those in~\eqref{eq:def_L}.

The functional space
$$
L^2(\psi)=\left\{ v:\R^n \to \R, \ v \in L^1_{\rm loc}(\R^n) \text{ and } \int_{\R^n} v^2 \psi < \infty \right\}
$$
plays a crucial role in the following. It is an Hilbert space for the scalar product: for $u$ and $v$ in $L^2(\psi)$,
$$
\langle u , v \rangle_{\psi} = \int_{\R^n} u \, v \, \psi.
$$
Likewise, we will use the space $L^2(\psi^\xi)$, defined over functions $v:x_2^n \in \R^{n-1} \mapsto v(x_2^n) \in \R$.

For a given operator $O$, we denote by $O^\star$ its adjoint with respect to the scalar product in $L^2(\psi)$: for any smooth test functions $u$ and $v$,
$$
\langle O^\star u , v \rangle_{\psi} = \langle u , O v \rangle_{\psi}.
$$
It is standard to check that $L$ is a symmetric operator in $L^2(\psi)$ (which is equivalent to the reversibility of the process $X_t$ with respect to the equilibrium measure $\mu$): for any smooth test functions $u$ and $v$,
$$
\langle L u , v \rangle_{\psi} = \langle u , L v \rangle_{\psi} = - \beta^{-1} \int_{\R^n} \nabla u \cdot \nabla v \ \psi.
$$
We thus have $L^\star=L$, and $L=- \beta^{-1} \nabla^\star \nabla$. 

\subsection{Assumptions}

In the sequel, we work under the three following assumptions.

First, we assume that, for any $\xi$, the conditional probability measures $\psi^\xi(x_2^n) \, dx_2^n$ defined by~\eqref{eq:def_psixi} satisfy a Poincaré inequality for a constant $\rho$ independent of $\xi$: there exists $\rho >0$ such that, for any $\xi$ and any function $v \in H^1(\psi^\xi)$, it holds:
\begin{multline}
\label{eq:poincare}
\int_{\R^{n-1}} \left(v(x_2^n) - \int_{\R^{n-1}} v(x_2^n) \, \psi^\xi(x_2^n) \, dx_2^n\right)^2 \, \psi^\xi(x_2^n) \, dx_2^n 
\\
\le \frac{1}{\rho} \int_{\R^{n-1}} \left| \wn v(x_2^n) \right|^2 \, 
\psi^\xi(x_2^n) \, dx_2^n,
\end{multline}
where the functional space $H^1(\psi^\xi)$ is defined by
\begin{equation}
\label{eq:H1}
H^1(\psi^\xi)=\left\{v:\R^{n-1} \to \R, \ \int_{\R^{n-1}} \left( v^2 + \big|\wn v\big|^2 \right) \psi^\xi < \infty \right\}.
\end{equation}
Note that, by integration by parts, we have
$$
\int_{\R^{n-1}} (-L^\xi \phi)(x_2^n) \, \phi(x_2^n) \, 
\psi^\xi(x_2^n) \, dx_2^n
=
\beta^{-1} \int_{\R^{n-1}} \left| \wn \phi(x_2^n) \right|^2 \, 
\psi^\xi(x_2^n) \, dx_2^n
$$
for any $\phi \in {\mathcal C}^\infty_0(\R^{n-1})$. The assumption~\eqref{eq:poincare} is thus a spectral gap assumption on the operator $-L^\xi$. A Poincaré inequality holds on a probability measure $\exp(-\beta W(x)) \, dx$ under relatively mild assumption on $W$. Basically, if $W$ is smooth and grows at least linearly at infinity, then $\exp(-\beta W(x)) \, dx$ satisfies a Poincaré inequality (see for example~\cite{ABC-00}). In particular, if $W$ is $\alpha$-convex, then the Poincaré inequality is satisfied with the constant $\alpha/2$.

\medskip

Second, we assume that the cross derivative $\wn \partial_1 V$ is in $L^2(\psi)$:
\begin{equation}
\label{eq:bound_d12V}
\kappa^2 := 
\int_{\R^{n}} \left| \wn \partial_1 V (x) \right|^2 \, \psi(x) dx < \infty.
\end{equation}
The two above assumptions~\eqref{eq:poincare} and~\eqref{eq:bound_d12V} are very similar to (and actually weaker than) the assumptions [A1] and~[A2] mentioned in the introduction and which have been used in~\cite{legoll-lelievre-10} to study the correctness of the effective dynamics in terms of time marginals.

\bigskip

Third, we assume that the function $b$ defined by~\eqref{eq:def_b} is one-sided Lipschitz on $\R$: there exists $L_b>0$ such that
\begin{equation}
\label{eq:b_lip}
\forall x \in \R, \ \forall y \in \R, \quad
\left( b(y) - b(x) \right) \left( x - y \right)
\leq L_b \left( x - y \right)^2.
\end{equation}
If $b$ is differentiable, this is equivalent to $-b'(x) \leq L_b$ for any $x \in \R$.

In addition, for any $x>0$, we introduce
\begin{equation}
\label{eq:def_alpha_pre}
\alpha(x) = \sup_{s \in [-x,x]} |b'(s)|
\end{equation}
and assume that
\begin{multline}
C_\alpha(\beta) 
= 
\E \left[ \Big( \alpha \left(\left| X^1 \right| \right) \Big)^2 \right] 
=
\E \left[ \left( \sup_{s \in [-|X^1|,|X^1|]} |b'(s)| \right)^2 \right]
\\
=
\int_\R \left( \sup_{s \in [-|\xi|,|\xi|]} |b'(s)| \right)^2 \, \varphi(\xi) \, d\xi
< \infty 
\label{eq:hyp_alpha}
\end{multline}
where $\varphi$ (defined by~\eqref{eq:def_varphi}) is the marginal probability density along the first coordinate $X^1$. The quantity $C_\alpha$ depends on $\beta$ as $\varphi$ and $b$ depend on $\beta$. Note that, if we think of $V$ as having polynomial growth, we see that $\alpha$ also has polynomial growth. In this case, the assumption~\eqref{eq:hyp_alpha} is hence fulfilled. The assumption~\eqref{eq:hyp_alpha} is further discussed in Remarks~\ref{rem:alpha1} and~\ref{rem:alpha2} below. 

We will also sometimes consider the assumption
\begin{equation}
C_{\alpha,p}(\beta) 
= 
\int_\R \left( \sup_{s \in [-|\xi|,|\xi|]} |b'(s)| \right)^{2p/(2-p)} \, \varphi(\xi) \, d\xi
< \infty 
\label{eq:hyp_alpha_p}
\end{equation}
for some $1 \leq p < 2$, which is stronger than~\eqref{eq:hyp_alpha}. Note that~\eqref{eq:hyp_alpha} corresponds to the case $p=1$.

\medskip

Roughly speaking, the assumptions~\eqref{eq:b_lip} and~\eqref{eq:hyp_alpha} will be used below to show that if $x(t)$ and $y(t)$ are solutions to $\dot{x}=-b(x)$ and $\dot{y}=-b(y)+\dot{e}$ (with $x(0)=y(0)$), then $\|x-y\|_{L^\infty(0,T)}$ is small if $\| e \|_{L^\infty(0,T)}$ is small (see Lemma~\ref{lem:gronwall} below).

\begin{remark}
The assumption~\eqref{eq:b_lip} is satisfied if $b$ is Lipschitz on bounded domains and increasing at infinity, which corresponds to a case when the associated free energy $F$ is convex at infinity and smooth.
\end{remark}

\subsection{Main result}

Our main result is the following.

\begin{proposition}
\label{prop:main_global}
Assume that~\eqref{eq:poincare},~\eqref{eq:bound_d12V},~\eqref{eq:b_lip} and~\eqref{eq:hyp_alpha} hold, and that the system starts at equilibrium:
\begin{equation}
\label{eq:equi}
X_0 \sim \mu.
\end{equation}
Consider $(X_t)_{0 \le t \le T}$ solution to~\eqref{eq:X} and $(\xi_t)_{0 \le t \le T}$ solution to~\eqref{eq:dyn_eff} over a bounded time interval $[0,T]$. Then, there exists a constant $C$, that is independent of $\rho$ and $\kappa$, and that only depends on $T$, $C_\alpha(\beta)$ and $L_b$, such that
\begin{equation}
\label{eq:main_result}
\EE \left( \sup_{0 \leq t \leq T} 
\left| X^1_t - \xi_t \right| \right) 
\leq 
C \sqrt{\beta} \ \frac{\kappa}{\rho}.
\end{equation}
\end{proposition}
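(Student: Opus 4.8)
The plan is to compare the two SDEs by writing $X^1_t - \xi_t$ as a solution to an ODE driven by a noise term, then use a Gronwall-type argument together with the three ingredients announced in the introduction. First I would write, using \eqref{eq:X1} and \eqref{eq:dyn_eff} (note the Brownian motions $W^1_t$ cancel),
\begin{equation*}
X^1_t - \xi_t = -\int_0^t \big(\partial_1 V(X_s) - b(\xi_s)\big)\, ds
= -\int_0^t \big(b(X^1_s) - b(\xi_s)\big)\, ds - \int_0^t f(X_s)\, ds,
\end{equation*}
where $f = b - \partial_1 V$, so that $f$ has zero conditional expectation given the first coordinate and $\E_\mu(f^2) < \infty$ by the argument of Section~\ref{sec:f} (this is where \eqref{eq:poincare} and \eqref{eq:bound_d12V} enter, and where the factor $\kappa/\rho$ comes from: one expects $\E_\mu(f^2) \le \beta \kappa^2/\rho^2$ or similar). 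Setting $e_t = -\int_0^t f(X_s)\, ds$ and $\delta_t = X^1_t - \xi_t$, we have $\dot{\delta}_t = -\big(b(X^1_t) - b(\xi_t)\big) + \dot{e}_t$, which is exactly the comparison situation described before Lemma~\ref{lem:gronwall} with $x(t) = \xi_t$ and $y(t) = X^1_t$, both starting from $X^1_0$.

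Next I would invoke the Lyons--Zhang argument of Section~\ref{sec:LZ}: introducing a Poisson equation $-Lg = f$ (solvable because $f$ is centered with respect to $\mu$, under the spectral gap provided by \eqref{eq:poincare}, at least for the relevant projected part), It\^o's formula gives $\int_0^t f(X_s)\, ds = g(X_0) - g(X_t) + \sqrt{2\beta^{-1}}\int_0^t \nabla g(X_s)\cdot dW_s$, and hence
\begin{equation*}
\E_\mu\left( \sup_{0 \le t \le T} \left| \int_0^t f(X_s)\, ds \right|^2 \right)
\le C\, \E_\mu(g^2) + C\, \E_\mu\big(|\nabla g|^2\big)\, T,
\end{equation*}
using Doob's maximal inequality for the martingale part and stationarity ($X_0 \sim \mu$) for the boundary terms. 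Both $\E_\mu(g^2)$ and $\beta^{-1}\E_\mu(|\nabla g|^2) = \langle g, -Lg\rangle_\psi = \langle g, f\rangle_\psi$ are controlled by $\E_\mu(f^2)$ and the spectral gap $\rho$, giving a bound of the form $\E_\mu(\sup_{[0,T]} |e_t|^2) \le C(T)\, \beta \kappa^2/\rho^2$, i.e. $\E(\sup_{[0,T]} |e_t|) \le C(T)\sqrt{\beta}\,\kappa/\rho$.

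Finally I would apply the Gronwall-type Lemma~\ref{lem:gronwall} (Section~\ref{sec:gronwall}) pathwise: for each realization, $\|\delta\|_{L^\infty(0,T)}$ is controlled by $\|e\|_{L^\infty(0,T)}$, with a multiplicative factor depending on $T$, $L_b$ (via the one-sided Lipschitz bound \eqref{eq:b_lip}) and on $\alpha(\sup_{[0,T]}|X^1_t|)$ — that is, on the local Lipschitz constant of $b$ over the range visited by $X^1$. Taking expectations and applying Cauchy--Schwarz to separate $\E(\alpha(\cdot)\,\|e\|_\infty)$ into $\sqrt{\E(\alpha(\cdot)^2)}\,\sqrt{\E(\|e\|_\infty^2)}$, the first factor is bounded by $\sqrt{C_\alpha(\beta)}$ once one uses that $\sup_{[0,T]}|X^1_t|$ has, at equilibrium, a law comparable to $\varphi$ (this requires a reversibility/stationarity argument, perhaps a reflection or a time-reversal trick, to bound $\E\big(\alpha(\sup_{[0,T]}|X^1_t|)^2\big)$ by a constant times $\E_\varphi(\alpha(|X^1|)^2) = C_\alpha(\beta)$, up to a $T$-dependent factor), and the second by the previous step. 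Combining everything yields \eqref{eq:main_result} with $C$ depending only on $T$, $C_\alpha(\beta)$ and $L_b$.

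The main obstacle I anticipate is the pathwise Gronwall step and the accompanying moment control: the naive Gronwall estimate produces a factor $\exp\big(\int_0^T \alpha(|X^1_s|)\, ds\big)$ or $\exp(T\,\alpha(\sup_{[0,T]}|X^1_t|))$, which is \emph{not} integrable in general (its expectation could blow up if $b'$ grows too fast), so one must be more careful — presumably Lemma~\ref{lem:gronwall} is stated so that the dependence on $\alpha$ is only \emph{polynomial} rather than exponential, which in turn forces exploiting the one-sided Lipschitz condition \eqref{eq:b_lip} to get a one-sided (hence non-exploding) Gronwall inequality on $\delta_t^2$. Reconciling the one-sided bound (which controls growth but not the local modulus of continuity near sign changes of $\delta$) with the need for an $L^\infty$-in-time estimate, and doing so with only $\alpha^2 \in L^1(\varphi)$ rather than a stronger integrability, is the delicate point; the stronger assumption \eqref{eq:hyp_alpha_p} is presumably what one needs if the cleanest version of this argument is to go through, with $p=1$ being the borderline case handled by a sharper analysis.
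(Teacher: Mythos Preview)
Your overall architecture --- decompose $X^1_t - \xi_t$ via $f = b - \partial_1 V$, control $e_t = \int_0^t f(X_s)\,ds$ by a Lyons--Zhang/Poisson argument, then feed this into a Gronwall lemma --- matches the paper exactly. Two of the three steps, however, differ from what you sketch in ways that matter.

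\medskip

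\textbf{The Lyons--Zhang step.} You propose solving the \emph{full} Poisson equation $-Lg = f$ and using the one-sided It\^o decomposition $\int_0^t f = g(X_0) - g(X_t) + \text{martingale}$. This produces a boundary term $g(X_0)-g(X_t)$, and bounding $\E_\mu(g^2)$ (or even solving for $g$) requires a Poincar\'e inequality for the \emph{full} measure $\mu$, which is not among the hypotheses --- only the conditionals $\psi^\xi$ satisfy~\eqref{eq:poincare}. The paper avoids this in two moves. First, it solves the \emph{fiberwise} Poisson problem $L^\xi u = f(\xi,\cdot)$ (Lemma~\ref{lem:poisson}); this uses only~\eqref{eq:poincare} and yields $\|\widehat\nabla u\|_{L^2(\psi)}^2 \le \beta^2\kappa^2/\rho^2$ (two factors of $\rho$: one from $\|f\|^2_{L^2(\psi)}\le \kappa^2/\rho$, a second from the energy estimate on $u$ --- so your guess $\E_\mu(f^2)\le \beta\kappa^2/\rho^2$ misplaces where the second $\rho$ enters). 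Second, writing $f = L^\xi u = -\beta^{-1}\nabla^\star\Phi$ with $\Phi=(0,\widehat\nabla u)$, Lemma~\ref{lem:landim_olla} bounds $\E\big[\sup_t|\int_0^t \nabla^\star\Phi(X_s)\,ds|^2\big] \le 8T\beta\|\Phi\|^2_{L^2(\psi)}$ via a \emph{forward--backward} martingale decomposition: time reversal of the stationary reversible process makes the boundary terms cancel, so only $\|\nabla w_\eta\|^2$ appears, never $\|w_\eta\|^2$. No spectral gap on $\mu$ is needed, and combining the two moves gives exactly $8T\beta\kappa^2/\rho^2$ (Proposition~\ref{prop:bb}).

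\medskip

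\textbf{The Gronwall step.} You correctly diagnose that a naive argument gives an exponential in $\alpha$, which is not integrable. The paper's Lemma~\ref{lem:gronwall} obtains a \emph{linear} dependence on $\alpha$ via the following trick: set $Z_t = Y_t - e_t$ and study $d|X_t - Z_t|$, splitting on the events $\{|X_t-Z_t| \ge |e_t|\}$ and $\{|X_t-Z_t| < |e_t|\}$. On the first event, the sign of $X_t-Z_t$ agrees with that of $X_t-Y_t$, so the one-sided Lipschitz bound~\eqref{eq:b_lip} applies directly and contributes $L_b(|X_t-Z_t|+|e_t|)$. On the second, $|b(X_t)-b(Y_t)| \le (\alpha(|X_t|)+\alpha(|Y_t|))\,|X_t-Y_t| \le 2(\alpha(|X_t|)+\alpha(|Y_t|))\,|e_t|$. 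After a standard Gronwall in the $L_b$ part one obtains
\[
\sup_{s\le t}|X_s-Z_s| \;\le\; 2e^{L_bt}\Big(\sup_{s\le t}|e_s|\Big)\int_0^t\big(\alpha(|X_s|)+\alpha(|Y_s|)+L_b/2\big)\,ds.
\]
Two points then make the expectation tractable, and both differ from your sketch: (i) $\alpha$ is evaluated at $|X_s|,|Y_s|$ \emph{pointwise in time}, not at $\sup_s|X^1_s|$, so no control of the running maximum is needed; (ii) \emph{both} $X^1_s$ and $\xi_s$ are stationary with marginal $\varphi$ (by~\eqref{eq:equi} and the construction of~\eqref{eq:dyn_eff}), so $\E[\alpha(|X^1_s|)^2]=\E[\alpha(|\xi_s|)^2]=C_\alpha(\beta)$ at every fixed $s$. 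Cauchy--Schwarz in $\omega$ then closes the estimate directly from~\eqref{eq:hyp_alpha}.
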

Note that the constant $C$ in~\eqref{eq:main_result} only depends on $\beta$ through its dependency to $C_\alpha(\beta)$. 

\begin{remark}
\label{rem:holder}
The proof of~\eqref{eq:main_result} also shows that, if we replace the assumption~\eqref{eq:hyp_alpha} by the stronger assumption~\eqref{eq:hyp_alpha_p} for some $1 \leq p < 2$, then there exists a constant $C$, that is independent of $\rho$ and $\kappa$, and that only depends on $p$, $T$, $C_{\alpha,p}(\beta)$ and $L_b$, such that
\begin{equation}
\label{eq:main_result_p}
\EE \left( \sup_{0 \leq t \leq T} 
\left| X^1_t - \xi_t \right|^p \right) 
\leq 
C \left( \sqrt{\beta} \ \frac{\kappa}{\rho} \right)^p.
\end{equation}
See Remark~\ref{rem:holder2} below.
\end{remark}

\begin{remark}
If $\partial_1 V$ is independent of $x_2^n$, then the dynamics~\eqref{eq:X1} is actually closed in $X^1$ and we expect the effective dynamics~\eqref{eq:dyn_eff} to be exact. This is indeed the case: if $\partial_1 V$ is independent of $x_2^n$, then we see from~\eqref{eq:bound_d12V} that $\kappa=0$, and~\eqref{eq:main_result} implies that the effective dynamics is exact.
\end{remark}

Before going into the details in the next sections, let us sketch the proof of Proposition~\ref{prop:main_global}. We introduce 
\begin{equation}
\label{eq:def_f}
f(x) = b(x^1)- \partial_1 V(x)
\end{equation}
and recast~\eqref{eq:X1} in the form
$$
dX^1_t 
=
- b(X^1_t) \, dt + f(X_t) \, dt 
+ \sqrt{2 \beta^{-1}} \, d W^1_t.
$$
Using the definition~\eqref{eq:def_b} of $b$, one gets: for any $\xi \in \R$,
$$
\int_{\R^{n-1}} f(\xi,x_2^n) \ \psi^\xi(x_2^n) \, dx_2^n = 0.
$$
Hence, under some adequate assumptions, for any $\xi$, there exists a unique function $x_2^n \mapsto u(\xi,x_2^n)$ solution to the following Poisson problem:
\begin{equation}
\label{eq:def_u_xi}
L^\xi u = f(\xi,\cdot) 
\quad \text{with} \quad 
\int_{\R^{n-1}} u(\xi,x_2^n) \, \psi^\xi(x_2^n) \, dx_2^n = 0.
\end{equation}
A precise result will be stated below (see Lemma~\ref{lem:poisson}). 
On the one hand, we thus obtain, after integration in time, that
\begin{equation}
\label{eq:X1_exact}
X^1_t - X^1_0 
= - \int_0^t b(X^1_s) \, ds + \sqrt{2 \beta^{-1}} \, W^1_t
+ \int_0^t L^{X^1_s} u(X_s) \, ds.
\end{equation}
On the other hand, integrating the stochastic differential equation~\eqref{eq:dyn_eff} between times $0$ and $t$, we get
\begin{equation}
\label{eq:X1_approx}
\xi_t - X^1_0 = - \int_0^t b(\xi_s) \, ds + \sqrt{2 \beta^{-1}} \, W^1_t.
\end{equation}
We deduce from~\eqref{eq:X1_exact} and~\eqref{eq:X1_approx} that
\begin{equation}
\label{eq:diff}
X^1_t - \xi_t 
= 
\int_0^t \left( b(\xi_s) - b(X^1_s) \right) \, ds + \int_0^t L^{X^1_s} u(X_s) \, ds.
\end{equation}
We then see that we need to bound $\dps \int_0^t L^{X^1_s} u(X_s) \, ds = \int_0^t f(X_s) \,ds$ in the right-hand side of~\eqref{eq:diff} in order to estimate the distance between $X^1_t$ and~$\xi_t$.

The proof is then based on two main arguments. First, we estimate the term $\dps \int_0^t L^{X^1_s} u(X_s) \, ds$ using a result due to T.~Lyons and T.~Zhang in~\cite{lyons-zhang-94} together with an estimate on the solution $u$ to the Poisson problem~\eqref{eq:def_u_xi}, see Section~\ref{sec:LZ}. This estimate relies on the two assumptions~\eqref{eq:poincare} and~\eqref{eq:bound_d12V}.

Second, a Gronwall-type result is proved in Section~\ref{sec:gronwall} to obtain an upper bound on $\dps \EE\left(\sup_{0 \le t \le T} \left| X^1_t - \xi_t \right| \right)$ in terms of $\dps\sqrt{ \EE\left(\sup_{0 \le t \le T} \left| \int_0^t L^{X^1_s} u(X_s) \, ds \right|^2 \right)}$. This result (of interest by its own) relies on the one-sided Lipschitz assumption~\eqref{eq:b_lip} as well as on the integrability assumption~\eqref{eq:hyp_alpha}.

We eventually point out that the stationarity assumption~\eqref{eq:equi} in Proposition~\ref{prop:main_global} can be relaxed using a standard argument based on a conditional expectation with respect to the initial condition, as stated in the following corollary:
\begin{corollary}
\label{coro:main_global}
Assume that~\eqref{eq:poincare},~\eqref{eq:bound_d12V},~\eqref{eq:b_lip} and~\eqref{eq:hyp_alpha} hold. Consider $(X_t)_{0 \le t \le T}$ solution to~\eqref{eq:X} and $(\xi_t)_{0 \le t \le T}$ solution to~\eqref{eq:dyn_eff} over a bounded time interval $[0,T]$, with the initial condition $X_0$ distributed according to a measure $\psi_0(x)\, dx$ such that 
\begin{equation}
\label{eq:borne_moment}
m = \left\| \frac{\psi_0}{\psi} \right\|_{L^\infty(\R^n)} < \infty.
\end{equation}
Then we have 
\begin{equation}
\label{eq:main_result2}
\EE\left( \sup_{0 \leq t \leq T} 
\left| X^1_t - \xi_t \right| \right) 
\leq C \, m \, \sqrt{\beta} \ \frac{\kappa}{\rho},
\end{equation}
where $C$ is the constant of the estimate~\eqref{eq:main_result}.
\end{corollary}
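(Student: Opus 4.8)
The plan is to reduce the general case to the equilibrium case of Proposition~\ref{prop:main_global} by conditioning on the initial position $X_0$. The starting observation is that, once $X_0$ is fixed, the coupled pair $(X_t,\xi_t)_{0\le t\le T}$ defined by~\eqref{eq:X} and~\eqref{eq:dyn_eff} is entirely determined by $X_0$ and the driving Brownian motion $W=(W^1,\dots,W^n)$, which is independent of $X_0$. By the standard measurable dependence of solutions of stochastic differential equations on their initial data, $\sup_{0\le t\le T}|X^1_t-\xi_t|$ is a measurable functional $\Psi(X_0,W)$ of $(X_0,W)$. We may therefore introduce the non-negative measurable function
$$
g(x_0) = \EE\Big[\, \sup_{0\le t\le T}\big|X^1_t-\xi_t\big| \ \Big|\ X_0=x_0 \,\Big] = \EE\big[\Psi(x_0,W)\big],
\qquad x_0\in\R^n,
$$
where the second equality uses the independence of $W$ and $X_0$. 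Crucially, $g$ does not depend on the law of the initial condition.

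Next I would invoke Proposition~\ref{prop:main_global}: when $X_0\sim\mu$, i.e.\ $X_0$ has density $\psi$, the tower property gives
$$
\int_{\R^n} g(x_0)\,\psi(x_0)\,dx_0 = \EE_\mu\Big(\sup_{0\le t\le T}\big|X^1_t-\xi_t\big|\Big) \le C\sqrt{\beta}\,\frac{\kappa}{\rho},
$$
with $C$ the constant of~\eqref{eq:main_result}. In particular $g<\infty$ $\mu$-almost everywhere, hence also $\psi_0(x)\,dx$-almost everywhere, since~\eqref{eq:borne_moment} forces $\psi_0\le m\,\psi$ and thus $\psi_0(x)\,dx\ll\mu$.

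It then remains to transfer this bound to the initial law $\psi_0(x)\,dx$. Using again the tower property, the non-negativity of $g$, the pointwise inequality $\psi_0\le m\,\psi$ coming from~\eqref{eq:borne_moment}, and the previous display, we obtain
$$
\EE\Big(\sup_{0\le t\le T}\big|X^1_t-\xi_t\big|\Big) = \int_{\R^n} g(x_0)\,\psi_0(x_0)\,dx_0 \le m\int_{\R^n} g(x_0)\,\psi(x_0)\,dx_0 \le C\,m\,\sqrt{\beta}\,\frac{\kappa}{\rho},
$$
which is exactly~\eqref{eq:main_result2}.

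There is essentially no analytical obstacle here, as all the work is contained in Proposition~\ref{prop:main_global}; the only point requiring care is the measurability/independence argument underpinning the definition of $g$ — in particular the fact that the \emph{same} function $g$ governs both the equilibrium and the non-equilibrium expectations, which rests on $X_0$ being independent of the Brownian motion $W$. Note that one cannot simply re-run the proof of Proposition~\ref{prop:main_global} verbatim for a general $X_0$, since that proof uses stationarity of $(X_t)_{t\ge0}$ both in the Lyons--Zhang estimate and through the constant $C_\alpha(\beta)$ (defined via the equilibrium marginal $\varphi$); the conditioning argument above is precisely what allows the same constant $C$ to be kept, at the price of the extra factor $m$.
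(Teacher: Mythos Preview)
Your proof is correct and follows essentially the same approach as the paper: define the conditional expectation $g(x_0)=\EE^{x_0}\big(\sup_{0\le t\le T}|X^1_t-\xi_t|\big)$, use Proposition~\ref{prop:main_global} to bound $\int g\,\psi$, and then dominate $\int g\,\psi_0$ by $m\int g\,\psi$ via the pointwise bound $\psi_0\le m\psi$. Your additional remarks on measurability and on why the equilibrium proof cannot be re-run directly are accurate and worth keeping, though the paper omits them.
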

\begin{proof}
Let us introduce 
$$
h(x_0) = \EE^{x_0} \left( \sup_{0 \leq t \leq T} \left| X^1_t - \xi_t \right| \right)
$$
where $\EE^{x_0}$ is the expectation conditionally to the fact that the initial condition of~\eqref{eq:X} is deterministic: $X_0=x_0 \in \R^n$. The result of Proposition~\ref{prop:main_global} is that
$$
\int_{\R^n} h(x) \, \psi(x) \, dx \leq C \sqrt{\beta} \frac{\kappa}{\rho}.
$$
Let us now consider $(X^1_t)_{t \ge 0}$ solution to~\eqref{eq:X} with initial condition distributed according to $\psi_0(x)\, dx$. We have, using~\eqref{eq:borne_moment},
\begin{multline*}
\EE \left( \sup_{0 \leq t \leq T} \left| X^1_t - \xi_t \right| \right)
=
\int_{\R^n} h(x) \, \psi_0(x) \, dx
= 
\int_{\R^n} h(x) \, \psi(x) \, \frac{\psi_0(x)}{\psi(x)} \, dx
\\
\leq 
\left\| \frac{\psi_0}{\psi} \right\|_{L^\infty(\R^n)}
\int_{\R^n} h(x) \, \psi(x) \, dx
\leq 
C \, m \, \sqrt{\beta} \frac{\kappa}{\rho}.
\end{multline*}
This concludes the proof of Corollary~\ref{coro:main_global}. 
\end{proof}

A similar corollary can be stated if we assume~\eqref{eq:hyp_alpha_p} rather than~\eqref{eq:hyp_alpha}, under the assumption that $\dps \left\| \frac{\psi_0}{\psi} \right\|_{L^q(\psi)} < \infty$ for some $q$ (see Remark~\ref{rem:holder}).

\section{An estimate on $f$ and a preliminary result}\label{sec:f}

\subsection{Estimate on $f$}

A direct consequence of the two assumptions~\eqref{eq:poincare} and~\eqref{eq:bound_d12V} is an estimate on the function $f$ defined by~\eqref{eq:def_f}:
$$
f(x) 
= 
b(x^1) - \partial_1 V (x) 
= 
\int_{\R^{n-1}} \partial_1 V(x^1,x_2^n) \, \psi^{x^1}(x_2^n) \, dx_2^n - \partial_1 V (x).
$$
\begin{lemma}\label{lem:f_L2}
Consider $f$ defined by~\eqref{eq:def_f} and assume that the conditional probability measures $\psi^{\xi}(x_2^n) \, dx_2^n$ satisfy the Poincaré inequalities~\eqref{eq:poincare}, and that the cross derivative $\partial_1 \wn V$ satisfies~\eqref{eq:bound_d12V}. Then we have
\begin{equation}
\label{eq:f_L2}
\int_{\R^n} f^2 \, \psi \le \frac{\kappa^2}{\rho}.
\end{equation}
\end{lemma}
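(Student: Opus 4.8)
\textbf{Proof plan for Lemma~\ref{lem:f_L2}.}
The strategy is to reduce the $L^2(\psi)$-norm of $f$ to a fiber-wise computation over the conditional measures $\psi^\xi(x_2^n)\,dx_2^n$, apply the Poincar\'e inequality~\eqref{eq:poincare} on each fiber, and then integrate the resulting bound against the marginal $\varphi(\xi)\,d\xi$. First I would use the disintegration of $\psi$, namely $\psi(\xi,x_2^n) = \varphi(\xi)\,\psi^\xi(x_2^n)$ (which follows directly from~\eqref{eq:def_psixi} and~\eqref{eq:def_varphi}), to write
$$
\int_{\R^n} f^2 \, \psi
=
\int_{\R} \left( \int_{\R^{n-1}} f(\xi,x_2^n)^2 \, \psi^\xi(x_2^n) \, dx_2^n \right) \varphi(\xi) \, d\xi.
$$

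Next I would exploit the key structural fact, already noted in the excerpt just before the statement, that for each fixed $\xi$ the function $x_2^n \mapsto f(\xi,x_2^n)$ has zero average against $\psi^\xi(x_2^n)\,dx_2^n$; indeed $f(\xi,\cdot) = b(\xi) - \partial_1 V(\xi,\cdot)$ and $b(\xi)$ is by definition~\eqref{eq:def_b} exactly the $\psi^\xi$-average of $\partial_1 V(\xi,\cdot)$. Therefore $f(\xi,\cdot)$ is a legitimate test function for the Poincar\'e inequality~\eqref{eq:poincare} (centered already), which yields, for each $\xi$,
$$
\int_{\R^{n-1}} f(\xi,x_2^n)^2 \, \psi^\xi(x_2^n) \, dx_2^n
\le
\frac{1}{\rho} \int_{\R^{n-1}} \left| \wn f(\xi,x_2^n) \right|^2 \, \psi^\xi(x_2^n) \, dx_2^n.
$$
Since $b(\xi)$ does not depend on $x_2^n$, we have $\wn f(\xi,x_2^n) = - \wn \partial_1 V(\xi,x_2^n)$, so $|\wn f|^2 = |\wn \partial_1 V|^2$ pointwise.

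Finally I would substitute this back and re-assemble the disintegration in the other direction:
$$
\int_{\R^n} f^2 \, \psi
\le
\frac{1}{\rho} \int_{\R} \left( \int_{\R^{n-1}} \left| \wn \partial_1 V(\xi,x_2^n) \right|^2 \psi^\xi(x_2^n) \, dx_2^n \right) \varphi(\xi) \, d\xi
=
\frac{1}{\rho} \int_{\R^n} \left| \wn \partial_1 V \right|^2 \psi
=
\frac{\kappa^2}{\rho},
$$
using~\eqref{eq:bound_d12V} in the last step. I do not expect any serious obstacle here: the only points requiring a little care are checking that $f(\xi,\cdot) \in H^1(\psi^\xi)$ so that~\eqref{eq:poincare} genuinely applies (which is where~\eqref{eq:bound_d12V}, guaranteeing finiteness of the right-hand side, does double duty, together with implicit smoothness/integrability of $\partial_1 V$), and justifying the use of Fubini/Tonelli in the disintegration steps, which is immediate since all integrands are nonnegative.
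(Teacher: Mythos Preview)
Your proof is correct and follows exactly the same approach as the paper: observe that $f(\xi,\cdot)$ has zero $\psi^\xi$-mean, apply the Poincar\'e inequality~\eqref{eq:poincare} fiberwise (noting $\wn f = -\wn\partial_1 V$), then multiply by $\varphi(\xi)$ and integrate in $\xi$ to recover~\eqref{eq:bound_d12V}. The paper's version is slightly terser but the argument is identical.
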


\begin{proof}
Notice that for any $\xi \in \R$, $\dps \int_{\R^{n-1}} f(\xi,x_2^n) \, \psi^\xi(x_2^n) \, dx_2^n=0$. Thus, using~\eqref{eq:poincare}, we get, for any fixed $\xi \in \R$,
$$
\int_{\R^{n-1}} (f(\xi,x_2^n))^2 \, \psi^\xi(x_2^n) \, dx_2^n
\le 
\rho^{-1} \int_{\R^{n-1}} \left| \wn \partial_1 V (\xi,x_2^n) \right|^2 \, \psi^\xi(x_2^n) \, dx_2^n.
$$
By multiplying by $\varphi(\xi)$, integrating over $\xi \in \R$ and using~\eqref{eq:bound_d12V}, we obtain~\eqref{eq:f_L2}.
\end{proof}

Notice that, as a corollary of~\eqref{eq:f_L2}, since
$$
\int_{\R} \left[ \int_{\R^{n-1}} \left( f(\xi,x_2^n) \right)^2 \, \psi^\xi(x_2^n) \, dx_2^n \right] \varphi(\xi) \, d\xi
=
\int_{\R^n} f^2 \, \psi < \infty
$$
and $\varphi > 0$, we have that, for almost all $\xi \in \R$,
\begin{equation}
\label{eq:f_L2_xi}
\int_{\R^{n-1}} f^2(\xi,x_2^n) \, \psi^\xi(x_2^n) \, dx_2^n < \infty.
\end{equation}

\subsection{A simple consequence of Lemma~\ref{lem:f_L2}}\label{sec:simple_csq}

Before proving our main result Proposition~\ref{prop:main_global}, we first state a preliminary result, which is weaker but also much more simple to prove than Proposition~\ref{prop:main_global}. This result already highlights the importance of the assumptions~\eqref{eq:poincare},~\eqref{eq:bound_d12V} and~\eqref{eq:b_lip}.

\begin{lemma}
\label{lem:main_global}
Assume that~\eqref{eq:poincare},~\eqref{eq:bound_d12V},~\eqref{eq:b_lip} and~\eqref{eq:equi} hold. Consider $(X_t)_{0 \le t \le T}$ solution to~\eqref{eq:X} and $(\xi_t)_{0 \le t \le T}$ solution to~\eqref{eq:dyn_eff} over a bounded time interval $[0,T]$. Then, there exists a constant $C$, that depends on $T$ and $L_b$, but is independent of $\beta$, $\rho$ and $\kappa$, such that
\begin{equation}
\label{eq:main_result_bis_global}
\EE \left( \sup_{0 \leq t \leq T} 
\left( X^1_t - \xi_t \right)^2 \right) 
\leq 
C \frac{\kappa^2}{\rho}.
\end{equation}
\end{lemma}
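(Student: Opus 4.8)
The plan is to start from the exact decomposition~\eqref{eq:diff}, which gives
$$
X^1_t - \xi_t = \int_0^t \left( b(\xi_s) - b(X^1_s) \right) ds + \int_0^t f(X_s)\, ds,
$$
and to control the two contributions separately. For the second term, the key point is that since we assume stationarity~\eqref{eq:equi}, the law of $X_s$ is $\mu$ for every $s$, so by the triangle inequality in $L^2(\Omega)$ and the Cauchy--Schwarz inequality in time,
$$
\EE\left( \sup_{0 \le t \le T} \left| \int_0^t f(X_s)\, ds \right|^2 \right)
\le \EE\left( \left( \int_0^T |f(X_s)|\, ds \right)^2 \right)
\le T \int_0^T \EE\left( f(X_s)^2 \right) ds
= T^2 \int_{\R^n} f^2 \, \psi
\le \frac{T^2 \kappa^2}{\rho},
$$
where the last inequality is exactly Lemma~\ref{lem:f_L2}. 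This is the cheap substitute for the Lyons--Zhang argument of Section~\ref{sec:LZ}: it avoids the Poisson equation entirely and gives an $L^\infty$-in-time bound for the drift-fluctuation term directly, at the cost of an extra factor of $T$ and of requiring stationarity. Denote $g(t) = \int_0^t f(X_s)\, ds$, so that $\sup_{[0,T]} |g|$ is in $L^2$ with norm controlled by $T\kappa/\sqrt\rho$.

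For the first term, I would run a deterministic Gronwall-type estimate pathwise. Writing $e_t = X^1_t - \xi_t$, we have $e_t = g(t) + \int_0^t (b(\xi_s) - b(X^1_s))\, ds$, hence $e_t - g(t)$ is absolutely continuous and its derivative is $b(\xi_t) - b(X^1_t)$. Then
$$
\frac{1}{2}\frac{d}{dt}\left( e_t - g(t) \right)^2 = \left( b(\xi_t) - b(X^1_t) \right)\left( e_t - g(t) \right)
= \left( b(\xi_t) - b(X^1_t) \right)(\xi_t - X^1_t) + \left( b(X^1_t) - b(\xi_t) \right) g(t).
$$
The first piece is bounded by $L_b\, e_t^2$ using the one-sided Lipschitz assumption~\eqref{eq:b_lip}; the second piece I would bound crudely by $\frac{1}{2}(b(X^1_t)-b(\xi_t))^2 + \frac{1}{2}g(t)^2$ — but this reintroduces the modulus of continuity of $b$, which is exactly why the simpler Lemma is ``weaker.'' A cleaner route for Lemma~\ref{lem:main_global}, which only claims an $L^2$ (not $L^p$, $p<2$) bound and allows $C$ to depend on $L_b$: bound $(b(X^1_t)-b(\xi_t))^2 \le$ (something)$\cdot e_t^2$ is not available without $\alpha$; instead use $(b(X^1_t)-b(\xi_t)) g(t) \le L_b\, |e_t|\,|g(t)| + (\text{remainder})$ is also not clean. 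The honest cheap argument is: set $\delta_t = e_t - g(t)$, so $\delta_t' = b(\xi_t)-b(X^1_t) = \big(b(\xi_t)-b(X^1_t)\big)$, and
$$
\frac{1}{2}\frac{d}{dt}\delta_t^2 = \big(b(\xi_t)-b(X^1_t)\big)\delta_t = \big(b(\xi_t)-b(X^1_t)\big)(\xi_t - X^1_t) - \big(b(\xi_t)-b(X^1_t)\big) g(t) \le L_b e_t^2 - \big(b(\xi_t)-b(X^1_t)\big) g(t).
$$
Now $e_t^2 \le 2\delta_t^2 + 2 g(t)^2$, and for the remaining cross term I would integrate by parts in time: $\int_0^t \big(b(\xi_s)-b(X^1_s)\big) g(s)\, ds = \delta_t g(t) - \int_0^t \delta_s\, f(X_s)\, ds$ (using $g' = f(X_\cdot)$ and $\delta' = b(\xi)-b(X^1)$ — wait, the clean identity is $\int_0^t \delta_s' g(s) ds = \delta_t g(t) - \int_0^t \delta_s g'(s) ds$), giving $|\delta_t g(t)| + \int_0^t |\delta_s| |f(X_s)|\, ds \le \frac14 \delta_t^2 + g(t)^2 + \frac12 \int_0^t \delta_s^2 ds + \frac12 \int_0^t f(X_s)^2 ds$. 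Absorbing the $\frac14\delta_t^2$ into the left side and applying the integral form of Gronwall's lemma then yields $\sup_{[0,t]} \delta_s^2 \le C(T,L_b)\big( \sup_{[0,T]} g^2 + \int_0^T f(X_s)^2 ds \big)$, whence $\sup_{[0,T]} e_t^2 \le C(T,L_b)\big(\sup_{[0,T]} g^2 + \int_0^T f(X_s)^2 ds\big)$.

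The final step is to take expectations. Using $\EE\big(\int_0^T f(X_s)^2 ds\big) = T \int_{\R^n} f^2 \psi \le T\kappa^2/\rho$ by stationarity and Lemma~\ref{lem:f_L2}, together with the bound on $\EE(\sup_{[0,T]} g^2)$ derived above, gives
$$
\EE\left( \sup_{0 \le t \le T} (X^1_t - \xi_t)^2 \right) \le C(T, L_b)\, \frac{\kappa^2}{\rho},
$$
with $C$ independent of $\beta$, $\rho$, $\kappa$, as claimed. The main obstacle is the handling of the cross term $\int_0^t \big(b(\xi_s)-b(X^1_s)\big) g(s)\, ds$ in the Gronwall argument without invoking a two-sided Lipschitz or local-Lipschitz bound on $b$; the integration-by-parts trick that trades $b(\xi)-b(X^1)$ for $g' = f(X_\cdot)$ is what makes it go through, and it is precisely the device that must be refined (via the modulus $\alpha$ and assumption~\eqref{eq:hyp_alpha}) to upgrade from this $L^2$ estimate to the sharper $L^1$ estimate of Proposition~\ref{prop:main_global}.
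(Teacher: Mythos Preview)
Your argument is correct, but it takes a noticeably more circuitous route than the paper's. The paper simply differentiates $\frac{1}{2}(X^1_t-\xi_t)^2$ directly: since $d(X^1_t-\xi_t)=\big(b(\xi_t)-b(X^1_t)+f(X_t)\big)\,dt$, one gets
\[
\frac{1}{2}(X^1_t-\xi_t)^2=\int_0^t (X^1_s-\xi_s)\big(b(\xi_s)-b(X^1_s)\big)\,ds+\int_0^t (X^1_s-\xi_s)\,f(X_s)\,ds,
\]
bounds the first integrand by $L_b(X^1_s-\xi_s)^2$ via~\eqref{eq:b_lip}, and the second by $\frac{1}{2}(X^1_s-\xi_s)^2+\frac{1}{2}f(X_s)^2$ via Young. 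A standard Gronwall then gives $(X^1_t-\xi_t)^2\le e^{(2L_b+1)T}\int_0^T f(X_s)^2\,ds$, and taking expectations with stationarity and Lemma~\ref{lem:f_L2} finishes. No integration by parts, no auxiliary process $\delta_t$, no separate bound on $\sup_t|g(t)|^2$ is needed; only $\EE\int_0^T f(X_s)^2\,ds$ appears.

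Your detour---subtracting $g(t)=\int_0^t f(X_s)\,ds$, working with $\delta_t=e_t-g(t)$, and then integrating by parts to trade $b(\xi)-b(X^1)$ for $f(X_\cdot)$---does the job, but it is precisely the machinery designed for the harder Proposition~\ref{prop:main_global}, where one only controls $\sup_t|g(t)|$ (via Lyons--Zhang) and not $\int_0^T f^2$. Here, since $\int_0^T f(X_s)^2\,ds$ is already small by stationarity, the cross term $(X^1_s-\xi_s)f(X_s)$ can be absorbed directly with Young, and the whole $\delta_t$/integration-by-parts layer is unnecessary. (Minor point: in your decomposition of $(b(\xi_t)-b(X^1_t))\delta_t$ there is a sign slip---$\delta_t=X^1_t-\xi_t-g(t)$, not $\xi_t-X^1_t-g(t)$---but the subsequent bound $\le L_b e_t^2$ is the correct one.)
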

Note that we do not need the assumption~\eqref{eq:hyp_alpha} here. The above result obviously implies that
$$
\EE \left( \sup_{0 \leq t \leq T} \left| X^1_t - \xi_t \right| \right) \leq C \frac{\kappa}{\sqrt{\rho}},
$$
a result which is weaker than Proposition~\ref{prop:main_global} as we think of $\rho$ as being large. The stationarity assumption~\eqref{eq:equi} can be weakened in a similar way as in Corollary~\ref{coro:main_global} above by using the same conditioning argument.

\begin{proof}
From~\eqref{eq:X1} and~\eqref{eq:dyn_eff}, and using the definition~\eqref{eq:def_f} of $f$, we have
$$
d(X^1_t - \xi_t) 
=
\left( b(\xi_t) - b(X^1_t) \right) \, dt + f(X_t) \, dt.
$$
We deduce from an It\^o's computation that 
\begin{align*}
\frac12 \left( X^1_t - \xi_t \right)^2 
&=
\int_0^t \left( X^1_s - \xi_s \right) 
\left( b(\xi_s) - b(X^1_s) \right) \, ds 
+
\int_0^t \left( X^1_s - \xi_s \right) f(X_s) \, ds
\\
&\leq 
\left(L_b +\frac12\right) \int_0^t \left( \xi_s - X^1_s \right)^2 ds
+
\frac12 \int_0^t \left( f(X_s) \right)^2 \, ds,
\end{align*}
where we have used~\eqref{eq:b_lip} and a Young inequality in the last line. Setting $\dps \phi(t) = \frac12 \int_0^t \left( X^1_s - \xi_s \right)^2 ds$ and $\dps M = \frac12 \int_0^T \left( f(X_s) \right)^2 \, ds$, we thus see that
\begin{equation}
\label{eq:tuut_}
\phi'(t) 
\leq M + (2 L_b +1) \phi(t).
\end{equation}
Using Gronwall lemma and the fact that $\phi(0)= 0$, we deduce that, for any $t \in [0,T]$,
$$
\phi(t) \leq \frac{M}{2L_b+1} \left( e^{(2L_b+1) t} - 1 \right).
$$
Hence, using~\eqref{eq:tuut_}, we obtain
\begin{align*}
\frac12 \left( X^1_t - \xi_t \right)^2
=
\phi'(t)
\leq
e^{(2 L_b+1) t} M
\le\frac12 e^{(2 L_b+1) T} \ \int_0^T \left( f(X_s) \right)^2 \, ds,
\end{align*}
where the right-hand side is independent of $t$. Taking the supremum over $t\in [0,T]$ and taking expectations, we deduce that
$$
\EE \left( \sup_{0 \leq t \leq T} \left( X^1_t - \xi_t \right)^2 \right) 
\leq 
e^{(2 L_b+1) T} \ \EE \left[ \int_0^T \left( f(X_s) \right)^2 \, ds \right].
$$
Now, one can use Lemma~\ref{lem:f_L2} above to control the right-hand side. Indeed, since $X_0$ (and thus $X_t$ at any time $t$) is distributed according to the equilibrium measure (see~\eqref{eq:equi}), we obtain, using~\eqref{eq:f_L2}, that
\begin{align*}
\EE \left( \sup_{0 \leq t \leq T} \left( X^1_t - \xi_t \right)^2 \right) 
\leq 
e^{(2 L_b+1) T} \ T \int_{\R^n} f^2 \psi
\leq
T e^{(2 L_b+1) T} \ \frac{\kappa^2}{\rho}. 
\end{align*}
This proves the claimed bound~\eqref{eq:main_result_bis_global} and concludes the proof of Lemma~\ref{lem:main_global}.
\end{proof}

\section{Estimate on the term $\dps \int_0^t L^{X^1_s} u(X_s) \, ds$ in~\eqref{eq:diff}}
\label{sec:LZ}

The aim of this section is to get an estimate in terms of $\kappa$ and $\rho$ on the last term $\dps \int_0^t L^{X^1_s} u(X_s) \, ds= \int_0^t f(X_s) \, ds$ in~\eqref{eq:diff} (where, we recall, $u$ is the solution to the Poisson problem~\eqref{eq:def_u_xi}) using the estimate on $f$ of Lemma~\ref{lem:f_L2}, and assuming that $X_0$ is distributed according to the equilibrium measure~$\mu$. As explained in Section~\ref{sec:landim_olla}, it is enough to estimate $\dps \int_{\R^n} \left| \wn u \right|^2 \psi$. The well-posedness of~\eqref{eq:def_u_xi} and a bound on $\dps \int_{\R^n} \left| \wn u \right|^2 \psi$ is shown in Section~\ref{sec:poisson}. Finally, Section~\ref{sec:estim_last_term} collects the results of Sections~\ref{sec:poisson} and~\ref{sec:landim_olla} to get an estimate on $\dps \int_0^t L^{X^1_s} u(X_s) \, ds$.

In all this section, only the two assumptions~\eqref{eq:poincare} and~\eqref{eq:bound_d12V} are needed.

\subsection{The Poisson problem}\label{sec:poisson}

Let us first state an existence and uniqueness result for the Poisson problem~\eqref{eq:def_u_xi} introduced above, as well as an estimate on its solution.
\begin{lemma}
\label{lem:poisson}
Assume that~\eqref{eq:poincare} and~\eqref{eq:bound_d12V} hold, and consider the function $f$ defined by~\eqref{eq:def_f}. Then, for any $\xi \in \R$, there exists a unique function $x_2^n \mapsto u(\xi,x_2^n)$ in $H^1(\psi^\xi)$ such that
\begin{equation}
\label{eq:def_u_xi_prime}
L^\xi u = f(\xi,\cdot) 
\quad \text{with} \quad 
\int_{\R^{n-1}} u(\xi,x_2^n) \, \psi^\xi(x_2^n) \, dx_2^n = 0,
\end{equation}
where the functional space $H^1(\psi^\xi)$ is defined by~\eqref{eq:H1}. Moreover, $u$ is a ${\mathcal C}^\infty$ function and satisfies the estimate
\begin{equation}
\label{eq:bound_der_u}
\int_{\R^{n-1}} \left| \wn u(\xi,x_2^n) \right|^2 
\, \psi^\xi(x_2^n) \, dx_2^n
\leq 
\frac{\beta^2}{\rho} \int_{\R^{n-1}} f^2(\xi,x_2^n) 
\, \psi^\xi(x_2^n) \, dx_2^n.
\end{equation} 
In addition, we have
\begin{equation}
\label{eq:estim_wnu}
\int_{\R^n} \left| \wn u \right|^2 \psi \le \beta^2 \, \frac{\kappa^2}{\rho^2}.
\end{equation}
\end{lemma}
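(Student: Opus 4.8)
The plan is to prove Lemma~\ref{lem:poisson} in three stages: existence/uniqueness and regularity for the Poisson problem~\eqref{eq:def_u_xi_prime} at fixed $\xi$, then the pointwise-in-$\xi$ energy estimate~\eqref{eq:bound_der_u}, and finally the integrated estimate~\eqref{eq:estim_wnu}.

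For existence and uniqueness, fix $\xi$ and work in the Hilbert space $H^1_0(\psi^\xi)$ of functions in $H^1(\psi^\xi)$ with zero average against $\psi^\xi$. By~\eqref{eq:f_L2_xi} the right-hand side $f(\xi,\cdot)$ is in $L^2(\psi^\xi)$, and it has zero mean by the defining property of $b$. The weak formulation of $L^\xi u = f(\xi,\cdot)$ reads $\beta^{-1}\int_{\R^{n-1}} \wn u \cdot \wn v\,\psi^\xi\,dx_2^n = -\int_{\R^{n-1}} f(\xi,\cdot)\,v\,\psi^\xi\,dx_2^n$ for all $v \in H^1_0(\psi^\xi)$, using the integration-by-parts identity recalled just after~\eqref{eq:poincare}. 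The bilinear form is coercive on $H^1_0(\psi^\xi)$ precisely because of the Poincar\'e inequality~\eqref{eq:poincare} (which bounds the $L^2$ norm by the Dirichlet form on mean-zero functions), and the linear form is continuous by Cauchy--Schwarz plus Poincar\'e; Lax--Milgram then gives a unique solution. Smoothness of $u$ follows from elliptic regularity: $L^\xi$ is a uniformly elliptic operator with smooth coefficients (since $V$ is smooth), so $u \in {\mathcal C}^\infty$ by standard interior regularity / hypoellipticity, $f(\xi,\cdot)$ being smooth in $x_2^n$.

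For~\eqref{eq:bound_der_u}, test the weak formulation with $v = u$ itself: $\beta^{-1}\int |\wn u|^2 \psi^\xi = -\int f(\xi,\cdot)\,u\,\psi^\xi \le \left(\int f^2(\xi,\cdot)\,\psi^\xi\right)^{1/2}\left(\int u^2\,\psi^\xi\right)^{1/2}$. Since $u$ has zero mean, Poincar\'e~\eqref{eq:poincare} gives $\int u^2\,\psi^\xi \le \rho^{-1}\int |\wn u|^2\,\psi^\xi$. Substituting and dividing through by $\left(\int|\wn u|^2\psi^\xi\right)^{1/2}$ yields $\beta^{-1}\left(\int|\wn u|^2\psi^\xi\right)^{1/2} \le \rho^{-1/2}\left(\int f^2\psi^\xi\right)^{1/2}$, i.e.\ $\int |\wn u|^2\,\psi^\xi \le \frac{\beta^2}{\rho}\int f^2\,\psi^\xi$, which is exactly~\eqref{eq:bound_der_u}. (One should note the degenerate case $\int|\wn u|^2\psi^\xi = 0$ is trivial.) Then~\eqref{eq:estim_wnu} is immediate: multiply~\eqref{eq:bound_der_u} by $\varphi(\xi)$, integrate in $\xi$ using the disintegration $\psi^\xi(x_2^n)\varphi(\xi) = \psi(\xi,x_2^n)$, and invoke Lemma~\ref{lem:f_L2} to bound $\int_{\R^n} f^2\psi \le \kappa^2/\rho$, giving $\int_{\R^n}|\wn u|^2\psi \le \frac{\beta^2}{\rho}\cdot\frac{\kappa^2}{\rho} = \beta^2\kappa^2/\rho^2$.

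I expect the main technical obstacle to be the regularity claim "$u$ is ${\mathcal C}^\infty$" stated jointly — i.e.\ measurability and smoothness of $(\xi,x_2^n) \mapsto u(\xi,x_2^n)$ as a function of \emph{both} variables, not merely smoothness in $x_2^n$ for each fixed $\xi$ — since~\eqref{eq:def_u_xi_prime} only prescribes $u$ as a solution of an equation in the $x_2^n$ variables. This requires differentiating the equation in $\xi$ (the coefficients of $L^\xi$ and the right-hand side $f$ depend smoothly on $\xi$) and using uniqueness to identify $\partial_\xi u$ with the solution of the corresponding differentiated Poisson problem, which itself has zero mean up to an explicit correction term; the uniform-in-$\xi$ Poincar\'e constant in~\eqref{eq:poincare} is what makes these a priori bounds uniform. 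For the purposes of the later sections only the measurability and the quantitative bounds~\eqref{eq:bound_der_u}--\eqref{eq:estim_wnu} are truly needed, so if a fully rigorous joint-smoothness argument is cumbersome one can state it as a regularity lemma and give the $\xi$-differentiation argument in an appendix.
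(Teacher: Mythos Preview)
Your proof is correct and follows essentially the same approach as the paper: Lax--Milgram on the mean-zero subspace of $H^1(\psi^\xi)$ with coercivity from the Poincar\'e inequality~\eqref{eq:poincare}, the energy estimate~\eqref{eq:bound_der_u} by testing against $u$ itself and applying Cauchy--Schwarz then Poincar\'e, and~\eqref{eq:estim_wnu} by integrating in $\xi$ and invoking Lemma~\ref{lem:f_L2}. The paper handles the regularity claim with the same one-line appeal to ``standard elliptic regularity results''; your observation that joint smoothness in $(\xi,x_2^n)$ is not immediate from the fixed-$\xi$ problem is a valid point that the paper does not address explicitly either.
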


\begin{proof}
Let us introduce the functional space 
$$
H^1_m(\psi^\xi)
=
\left\{v \in H^1(\psi^\xi), \quad \int_{\R^{n-1}} v(x_2^n) \, \psi^\xi(x_2^n) \, dx_2^n = 0 \right\}.
$$
A variational formulation of~\eqref{eq:def_u_xi_prime} is the following: find $u(\xi,\cdot) \in H^1_m(\psi^\xi)$ such that, for all $v \in H^1_m(\psi^\xi)$,
\begin{equation}
\label{eq:def_u_xi_var}
-\beta^{-1} \int_{\R^{n-1}} \wn u(\xi,\cdot) \cdot \wn v \ \psi^\xi = \int_{\R^{n-1}} f(\xi,\cdot) \ v \ \psi^\xi.
\end{equation}
Here, we used the fact that for any smooth functions $u : \R^n \to \R$ and $v : \R^{n-1} \to \R$,
$$
\int_{\R^{n-1}} (L^\xi u) \, v \, \psi^\xi = - \beta^{-1} \int_{\R^{n-1}} \wn u(\xi,\cdot) \cdot \wn v \ \psi^\xi.
$$
The variational problem~\eqref{eq:def_u_xi_var} admits a unique solution using Lax-Milgram lemma and~\eqref{eq:poincare} to get the coercivity of the bilinear form. The right-hand-side in~\eqref{eq:def_u_xi_var} is well defined in view of~\eqref{eq:f_L2_xi}. Moreover, the solution to~\eqref{eq:def_u_xi_var} is indeed a solution to~\eqref{eq:def_u_xi_prime} (in distributional sense, say) since $\dps \int_{\R^{n-1}} f \, \psi^\xi=0$.

By standard elliptic regularity results, since the functions $V$ and $f$ are assumed to be smooth, the function $u$ is actually ${\mathcal C}^\infty$. 

By taking $v=u$ in~\eqref{eq:def_u_xi_var}, we get
\begin{align*}
\int_{\R^{n-1}} \left| \wn u \right|^2 \, \psi^\xi 
&= -\beta \int_{\R^{n-1}} f \, u \, \psi^\xi
\\
&\le \beta \left( \int_{\R^{n-1}} f^2 \, \psi^\xi\right)^{1/2} \left( \int_{\R^{n-1}} u^2 \, \psi^\xi\right)^{1/2}
\\
&\le \frac{\beta}{\sqrt{\rho}} \left( \int_{\R^{n-1}} f^2 \, \psi^\xi\right)^{1/2} \left( \int_{\R^{n-1}} \left| \wn u \right|^2 \psi^\xi\right)^{1/2}
\end{align*}
where we used~\eqref{eq:poincare} in the last line. This yields~\eqref{eq:bound_der_u}. By combining~\eqref{eq:f_L2} and~\eqref{eq:bound_der_u}, we get~\eqref{eq:estim_wnu}. This concludes the proof.
\end{proof}

\subsection{An estimate on square-integrable martingales} 
\label{sec:landim_olla}

The following general result (see~\cite{lyons-zhang-94} or~\cite[Section 2.5, Lemma 2.4]{komorowski-landim-olla-12}) is useful for our proof.

\begin{lemma}
\label{lem:landim_olla}
Let $(X_t)_{t \geq 0}$ be the solution to~\eqref{eq:X}, with its initial condition distributed according to the equilibrium measure $\mu$ (see assumption~\eqref{eq:equi}). Consider a function $\Phi: \R^n \to \R^n$ such that $\Phi \in ({\mathcal C}^\infty \cap L^2(\psi))^n$. Then, for any $T$, we have
\begin{equation}
\label{eq:resu:olla}
\EE \left[ \sup_{0 \leq t \leq T} \left| \int_0^t \nabla^\star \Phi(X_s) \, ds \right|^2 \right] 
\leq
8 T \beta \, \| \Phi \|^2_{L^2(\psi)},
\end{equation}
where, we recall, $\nabla^\star$ denotes the adjoint of the operator $\nabla$ with respect to the $L^2(\psi)$ scalar product, so that
$$ 
\nabla^\star \Phi = \beta \nabla V \cdot \Phi - {\rm div}(\Phi).
$$
\end{lemma}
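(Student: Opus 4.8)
The plan is to prove Lemma~\ref{lem:landim_olla} by the Lyons--Zhang time-reversal argument. The key observation is that $\int_0^t \nabla^\star \Phi(X_s)\,ds$ is \emph{not} a martingale, but it can be decomposed as a sum of a forward martingale and a (time-reversed) backward martingale, each of which is controlled by Doob's maximal inequality. Concretely, writing $g = \nabla^\star \Phi \in L^2(\psi)$ with $\int_{\R^n} g\,\psi = 0$ (this mean-zero property follows from $\langle \nabla^\star\Phi, 1\rangle_\psi = \langle \Phi, \nabla 1\rangle_\psi = 0$), one does \emph{not} solve a Poisson equation for $g$; instead one exploits directly the gradient structure $g = \nabla^\star\Phi$.

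First I would recall the forward--backward martingale decomposition. For the reversible diffusion $(X_t)$ started at equilibrium, and for a nice function $G$, It\^o's formula gives
\begin{equation*}
G(X_t) - G(X_0) = \int_0^t LG(X_s)\,ds + \sqrt{2\beta^{-1}}\int_0^t \nabla G(X_s)\cdot dW_s,
\end{equation*}
while running time backwards, $\widetilde{X}_s := X_{T-s}$ is again a diffusion with the same generator (by reversibility), so
\begin{equation*}
G(X_T) - G(X_t) = \int_0^{T-t} L G(\widetilde X_r)\,dr + \sqrt{2\beta^{-1}}\,\widetilde M_{T-t}
\end{equation*}
for a backward Brownian motion. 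Adding a suitable combination eliminates the stochastic integrals and shows that $2\int_0^t LG(X_s)\,ds$ equals a forward martingale increment plus a backward martingale increment plus boundary terms $G(X_0),G(X_t),G(X_T)$. The point is then that any $g$ of the form $\nabla^\star\Phi = -\beta^{-1}e^{\beta V}\,\mathrm{div}(e^{-\beta V}\Phi)$ can be written as $g = L G$ in a weak sense with an a priori control — more precisely, one does not need $G$ itself in $L^2(\psi)$, only that the relevant martingale brackets are bounded by $\|\Phi\|_{L^2(\psi)}^2$. Indeed the quadratic variation of the forward martingale $\int_0^t \nabla G\cdot dW_s$ is $\int_0^t |\nabla G(X_s)|^2 ds$, and the identity $-\langle LG, G\rangle_\psi = \beta^{-1}\|\nabla G\|_{L^2(\psi)}^2 = \langle g, G\rangle_\psi = \langle \nabla^\star\Phi, G\rangle_\psi = \langle \Phi, \nabla G\rangle_\psi$ gives by Cauchy--Schwarz $\|\nabla G\|_{L^2(\psi)} \le \beta\,\|\Phi\|_{L^2(\psi)}$.

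Assembling the pieces: from $2\int_0^t g(X_s)\,ds = (\text{forward mart.}) + (\text{backward mart.}) + (\text{bdry terms})$, I would take $\sup_{0\le t\le T}|\cdot|$, use the triangle inequality, then apply Doob's $L^2$ maximal inequality to each martingale (forward one on $[0,T]$, backward one on the reversed filtration), bounding $\EE[\sup_t (\text{mart.})^2] \le 4\,\EE[(\text{mart.})^2_T] = 4\cdot 2\beta^{-1}\EE\int_0^T|\nabla G(X_s)|^2 ds = 8\beta^{-1}T\|\nabla G\|_{L^2(\psi)}^2 \le 8\beta T\|\Phi\|_{L^2(\psi)}^2$, and handling the boundary terms (which, after the $1/2$ from the factor $2$ and careful bookkeeping, are absorbed into the same constant since at stationarity $\EE[G(X_0)^2]=\EE[G(X_t)^2]=\EE[G(X_T)^2]$ and these are dominated via the Poincar\'e-type estimate already available). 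Tracking the numerical constant to land exactly on $8T\beta$ requires care but is routine. The main obstacle is the rigorous justification of the backward martingale representation and the fact that $G$ (the "potential" with $LG=g$) need not be globally in $L^2(\psi)$ — this is precisely the subtlety that Lyons--Zhang and the Komorowski--Landim--Olla treatment address by working with approximations $g_\lambda = \lambda(\lambda - L)^{-1}g$ (resolvent regularization), proving the bound uniformly in $\lambda$, and passing to the limit; I would invoke this standard approximation scheme rather than reproduce it, since the statement explicitly cites~\cite{lyons-zhang-94} and~\cite{komorowski-landim-olla-12}.
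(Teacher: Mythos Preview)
Your approach is essentially the paper's: resolvent regularization plus the forward/backward It\^o (time-reversal) decomposition plus Doob. One point, however, is not right and would actually block you from getting the clean constant $8T\beta$ (and from closing the argument at all under the stated hypotheses).

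You write that after adding the forward and backward It\^o formulas one is left with ``boundary terms $G(X_0),G(X_t),G(X_T)$'' to be absorbed via a Poincar\'e-type estimate. Two problems. First, no Poincar\'e inequality is assumed on the full measure $\psi$ in this lemma (only on the conditionals $\psi^\xi$, and that assumption is not even invoked here). Second, even with the resolvent regularization $w_\eta$, one only has $\sqrt{\eta}\,\|w_\eta\|_{L^2(\psi)}$ bounded, so $\EE[w_\eta(X_0)^2]$ need not stay bounded as $\eta\to 0$; you cannot absorb these terms. The correct observation is that the boundary terms \emph{cancel exactly}: writing the backward process as $Y_s=X_{T-s}$, the forward formula gives $w_\eta(X_t)-w_\eta(X_0)=\int_0^t Lw_\eta(X_s)\,ds+\sqrt{2\beta^{-1}}\,M_t$ and the backward formula on $[T-t,T]$ gives $w_\eta(Y_T)-w_\eta(Y_{T-t})=w_\eta(X_0)-w_\eta(X_t)=\int_0^t Lw_\eta(X_s)\,ds+\sqrt{2\beta^{-1}}\,\overline M_t$. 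Summing yields
\[
0 \;=\; 2\int_0^t Lw_\eta(X_s)\,ds \;+\; \sqrt{2\beta^{-1}}\,(M_t+\overline M_t),
\]
with no residual $w_\eta(X_\cdot)$ terms. From here Doob on each martingale and the a priori bound $\|\nabla w_\eta\|_{L^2(\psi)}\le \beta\|\Phi\|_{L^2(\psi)}$ give exactly $8T\beta\|\Phi\|^2_{L^2(\psi)}$ for $\int_0^t Lw_\eta$, and then one passes from $Lw_\eta$ to $\nabla^\star\Phi=\eta w_\eta - Lw_\eta$ using $\eta\|w_\eta\|_{L^2(\psi)}\to 0$. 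Once you fix the cancellation, your sketch matches the paper's proof.
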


We have not looked at the minimal regularity assumptions on $\Phi$ for this lemma to hold, since we use it below for a function $\Phi$ which is indeed in $({\mathcal C}^\infty \cap L^2(\psi))^n$. We refer to~\cite[Section 2.5, Lemma 2.4]{komorowski-landim-olla-12} for a statement under weaker regularity assumptions on $\Phi$.

\begin{proof} The proof falls in two steps. 

\medskip

\noindent
{\bf Step 1.} For any $\eta>0$, consider the resolvent problem
\begin{equation}
\label{eq:poisson_L_prime_eta}
\eta w_\eta - L w_\eta = - \nabla^\star \Phi,
\end{equation}
a variational formulation of which is: find $w_\eta \in H^1(\psi)$ such that, for any test function $v \in H^1(\psi)$,
$$
\eta \int_{\R^n} w_\eta \, v \, \psi + \beta^{-1} \int_{\R^n} \nabla w_\eta \cdot \nabla v \, \psi = - \int_{\R^n} \Phi \cdot \nabla v \, \psi.
$$
Using the Lax-Milgram theorem, the above problem has a unique solution $w_\eta \in H^1(\psi)$. Furthermore, taking $v \equiv w_\eta$ as function test in the above variational formulation, we get
$$
\eta \| w_\eta \|^2_{L^2(\psi)} + \beta^{-1} \| \nabla w_\eta \|^2_{L^2(\psi)} \leq \| \Phi \|_{L^2(\psi)} \ \| \nabla w_\eta \|_{L^2(\psi)},
$$
which hence shows that, for any $\eta>0$, we have
\begin{equation}
\label{eq:a_priori}
\| \nabla w_\eta \|_{L^2(\psi)} \leq \beta \ \| \Phi \|_{L^2(\psi)}.
\end{equation}
Furthermore, we have $\dps \sqrt{\eta} \ \| w_\eta \|_{L^2(\psi)} \leq \sqrt{\beta} \ \| \Phi \|_{L^2(\psi)}$, hence
\begin{equation}
\label{eq:a_priori2}
\lim_{\eta \to 0} \eta \ \| w_\eta \|_{L^2(\psi)} = 0.
\end{equation}
In addition, by standard elliptic regularity results, since the functions $V$ and $\Phi$ are assumed to be smooth, the function $w_\eta$ is actually ${\mathcal C}^\infty$. 

\medskip

\noindent
{\bf Step 2.} Now, let us consider a fixed time $T>0$. Since $w_\eta$ is smooth, we can write, by It\^o's calculus, that, for any $t \in [0,T]$,
\begin{equation}
\label{eq:alpha1_}
\begin{split}
w_\eta(X_t) - w_\eta(X_0) = \int_0^t Lw_\eta(X_s) \, ds + \sqrt{2 \beta^{-1}}\int_0^t \nabla w_\eta(X_s) \cdot dW_s.
\end{split}
\end{equation}
Let us introduce, for $s \in [0,T]$, 
$$
Y_s = X_{T-s}.
$$
Since $X_0$ is distributed according to the equilibrium measure~$\mu$ and $(X_t)_{t \ge 0}$ is reversible with respect to~$\mu$, $(Y_s)_{0 \le t \le T}$ has the same law as $(X_s)_{0 \le t \le T}$. We thus can write
$$
dY_s = - \nabla V(Y_s) \, ds + \sqrt{2 \beta^{-1}} \, d \overline{W}_s
$$
with $Y_0 = X_T$ and where $(\overline{W}_s)_{0 \le s\le T}$ is a Brownian motion. Similarly to~\eqref{eq:alpha1_}, we have, for any $t \in [0,T]$,
\begin{equation}
\label{eq:alpha2_}
w_\eta(Y_T) - w_\eta(Y_{T-t}) = \int_{T-t}^T Lw_\eta(Y_s) \, ds + \sqrt{2 \beta^{-1}}\int_{T-t}^T \nabla w_\eta(Y_s) \cdot d\overline{W}_s.
\end{equation}
Setting
$$
M_t = \int_0^t \nabla w_\eta(X_s) \cdot dW_s 
\ \text{ and } \
\overline{M}_t = \int_{T-t}^T \nabla w_\eta(Y_s) \cdot d\overline{W}_s,
$$
we deduce from adding~\eqref{eq:alpha1_} and~\eqref{eq:alpha2_} that
\begin{align*}
0
&= 
\int_0^t Lw_\eta(X_s) \, ds + \int_{T-t}^T Lw_\eta(Y_s) \, ds 
+ \sqrt{2 \beta^{-1}} \left( M_t + \overline{M}_t \right)
\\
&=
\int_0^t Lw_\eta(X_s) \, ds + \int_0^t Lw_\eta(Y_{T-s}) \, ds 
+ \sqrt{2 \beta^{-1}} \left( M_t + \overline{M}_t \right)
\\
&=
2 \int_0^t Lw_\eta(X_s) \, ds + 
\sqrt{2 \beta^{-1}} \left( M_t + \overline{M}_t \right).
\end{align*}
Hence
$$
4 \EE \left[ \sup_{0 \leq t \leq T} 
\left| \int_0^t Lw_\eta(X_s) \, ds \right|^2 \right] 
\leq
4 \beta^{-1} \left( 
\EE \left[ \sup_{0 \leq t \leq T} \left| M_t \right|^2 \right]
+
\EE \left[ \sup_{0 \leq t \leq T} \left| \overline{M}_t \right|^2 \right]
\right).
$$
The random process $(M_t)_{0 \le t \le T}$ is a martingale since $\dps \E\int_0^T |\nabla w_\eta|^2 (X_s) \, ds = T \int_{\R^n} |\nabla w_\eta|^2 \, \psi < \infty$ in view of~\eqref{eq:a_priori} and the fact that, for any time $s \in [0,T]$, $X_s$ is distributed according to $\mu$. Using Doob inequality on the martingales $(M_t)_{0 \le t \le T}$ and $(\overline{M}_t)_{0 \le t \le T}$, which reads
$$
\EE \left[ \sup_{0 \leq t \leq T} \left| M_t \right|^2 \right]
\leq 
4 \EE \left[ \left| M_T \right|^2 \right]
$$
and likewise for $\overline{M}_t$, we obtain
\begin{align}
\EE \left[ \sup_{0 \leq t \leq T} 
\left| \int_0^t Lw_\eta(X_s) \, ds \right|^2 \right] 
& \leq
8 \beta^{-1} \EE \left[ \int_0^T \left| \nabla w_\eta(X_s) \right|^2 ds \right]
\nonumber \\
&=
8 \beta^{-1} T \int_{\R^n} \left| \nabla w_\eta \right|^2 \psi
\nonumber \\
& \leq
8 \beta T \| \Phi \|^2_{L^2(\psi)},
\label{eq:olla2}
\end{align}
where we have used~\eqref{eq:a_priori} in the last line.

\medskip

In view of~\eqref{eq:poisson_L_prime_eta}, we now write, for any $\nu > 0$, that
$$
\left| \int_0^t \nabla^\star \Phi(X_s) \, ds \right|^2 \leq (1+\nu) \left| \int_0^t L w_\eta(X_s) \, ds \right|^2 + \left( 1+ \frac{1}{\nu}\right) \left| \int_0^t \eta w_\eta(X_s) \, ds \right|^2,
$$
thus
\begin{eqnarray*}
&& \EE \left[ \sup_{0 \leq t \leq T} \left| \int_0^t \nabla^\star \Phi(X_s) \, ds \right|^2 \right]
\\
& \leq & 
(1+\nu) \EE \left[ \sup_{0 \leq t \leq T} \left| \int_0^t L w_\eta(X_s) \, ds \right|^2 \right] 
+
\left( 1+ \frac{1}{\nu}\right) \EE \left[ \sup_{0 \leq t \leq T} \left| \int_0^t \eta w_\eta(X_s) \, ds \right|^2 \right]
\\
& \leq & 
(1+\nu) \, 8 \beta T \| \Phi \|^2_{L^2(\psi)} + \left( 1+ \frac{1}{\nu}\right) \eta^2 T \, \EE \left[ \int_0^T w^2_\eta(X_s) \, ds \right]
\\
& \leq & 
(1+\nu) \, 8 \beta T \| \Phi \|^2_{L^2(\psi)} + \left( 1+ \frac{1}{\nu}\right) \eta^2 T^2 \, \|w_\eta\|^2_{L^2(\psi)}.
\end{eqnarray*}
We now pass to the limit $\eta \to 0$ using~\eqref{eq:a_priori2}. We get that, for any $\nu > 0$,
$$
\EE \left[ \sup_{0 \leq t \leq T} \left| \int_0^t \nabla^\star \Phi(X_s) \, ds \right|^2 \right]
\leq
(1+\nu) \, 8 \beta T \| \Phi \|^2_{L^2(\psi)},
$$
which implies~\eqref{eq:resu:olla}. This concludes the proof of Lemma~\ref{lem:landim_olla}.
\end{proof}

\subsection{Estimate on the term $\dps \int_0^t L^{X^1_s} u(X_s) \, ds$}
\label{sec:estim_last_term}

We are now in position to bound the last term in~\eqref{eq:diff}. 

\begin{proposition}
\label{prop:bb}
Let $X_t$ be the solution to~\eqref{eq:X}, with its initial condition distributed according to the equilibrium measure $\mu$ (see assumption~\eqref{eq:equi}). We assume that~\eqref{eq:poincare} and~\eqref{eq:bound_d12V} hold. Let $u$ be defined as the solution to the Poisson equation~\eqref{eq:def_u_xi_prime}. Then, we have
\begin{equation}
\label{eq:bb}
\EE \left[ \sup_{0 \leq t \leq T} \left| \int_0^t L^{X^1_s} u(X_s) \, ds\right|^2 \right] \leq 8 T \beta \frac{\kappa^2}{\rho^2}.
\end{equation}
\end{proposition}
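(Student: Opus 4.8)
The plan is to recognize that the quantity to be estimated is exactly of the form treated by Lemma~\ref{lem:landim_olla}, once we identify the right vector field $\Phi$. Recall from~\eqref{eq:diff} that $L^{X^1_s} u(X_s) = f(X_s)$, where $u$ solves the Poisson problem~\eqref{eq:def_u_xi_prime}. The key observation is that for each fixed $\xi$, the equation $L^\xi u(\xi,\cdot) = f(\xi,\cdot)$ means $f(\xi,x_2^n) = -\wn V(\xi,x_2^n)\cdot\wn u(\xi,x_2^n) + \beta^{-1}\widehat{\Delta} u(\xi,x_2^n)$, which we want to rewrite as a full divergence-form expression $\beta^{-1}\nabla^\star\Phi$ acting on $\R^n$. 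Since $\nabla^\star\Phi = \beta\nabla V\cdot\Phi - \mathrm{div}(\Phi)$, taking $\Phi = -\beta^{-1}(0,\wn u) = -\beta^{-1}(0,\partial_2 u,\dots,\partial_n u)$ gives $\nabla^\star\Phi = -\beta^{-1}\beta\,\wn V\cdot\wn u + \beta^{-1}\widehat{\Delta} u = -\wn V\cdot\wn u + \beta^{-1}\widehat{\Delta} u = f$. (Here the first component of $\Phi$ is zero, so the $\partial_1 V$ term and the $\partial_{11}$ term do not appear, matching the fact that $L^\xi$ only involves derivatives in $x_2^n$.) So $f = \nabla^\star\Phi$ with this choice of $\Phi$.

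The next step is to check that $\Phi$ satisfies the hypotheses of Lemma~\ref{lem:landim_olla}, namely $\Phi\in(\mathcal{C}^\infty\cap L^2(\psi))^n$. Smoothness follows from the smoothness of $u$ asserted in Lemma~\ref{lem:poisson} (elliptic regularity, with smooth dependence on the parameter $\xi$ as well). For the $L^2(\psi)$ bound, we compute
\begin{equation*}
\|\Phi\|_{L^2(\psi)}^2 = \beta^{-2}\int_{\R^n}|\wn u|^2\,\psi \le \beta^{-2}\cdot\beta^2\frac{\kappa^2}{\rho^2} = \frac{\kappa^2}{\rho^2},
\end{equation*}
where we used the estimate~\eqref{eq:estim_wnu} from Lemma~\ref{lem:poisson}. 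This is finite by assumption~\eqref{eq:bound_d12V}, so the hypotheses are met.

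Finally, we apply Lemma~\ref{lem:landim_olla} directly: since $\int_0^t L^{X^1_s}u(X_s)\,ds = \int_0^t f(X_s)\,ds = \int_0^t\nabla^\star\Phi(X_s)\,ds$, the estimate~\eqref{eq:resu:olla} gives
\begin{equation*}
\EE\left[\sup_{0\le t\le T}\left|\int_0^t L^{X^1_s}u(X_s)\,ds\right|^2\right] \le 8T\beta\,\|\Phi\|_{L^2(\psi)}^2 \le 8T\beta\,\frac{\kappa^2}{\rho^2},
\end{equation*}
which is precisely~\eqref{eq:bb}. The main (and essentially only) subtlety is the algebraic identification of $\Phi$ so that $\nabla^\star\Phi$ reproduces $f$ exactly — in particular, verifying that the absence of a first-component contribution in $\Phi$ is consistent with $L^\xi$ being the ``partial'' generator acting only in the variables $x_2^n$, and confirming that the adjoint formula $\nabla^\star\Phi = \beta\nabla V\cdot\Phi - \mathrm{div}(\Phi)$ from Lemma~\ref{lem:landim_olla} produces no spurious $\partial_1$ terms. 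Once that identity is in hand, everything else is a routine invocation of the two preceding lemmas.
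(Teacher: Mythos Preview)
Your proof is correct and follows essentially the same approach as the paper's: both identify a vector field $\Phi$ with vanishing first component and $\wn u$ in the remaining components, verify smoothness and $L^2(\psi)$-integrability via Lemma~\ref{lem:poisson}, and then invoke Lemma~\ref{lem:landim_olla} together with the bound~\eqref{eq:estim_wnu}. The only cosmetic difference is that the paper sets $\Phi=(0,\wn u)$ and writes $L^\xi u=-\beta^{-1}\nabla^\star\Phi$, whereas you absorb the factor $-\beta^{-1}$ into $\Phi$ so that $f=\nabla^\star\Phi$ directly; either way the arithmetic lands on the same bound $8T\beta\,\kappa^2/\rho^2$.
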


\begin{proof}
Let us introduce $\Phi=(0,\wn u):\R^n \to \R^n$. We have
\begin{align*}
L^\xi u 
&= - \wn V \cdot \wn u + \beta^{-1} \widehat{\Delta} u \\
&= \beta^{-1} ( -\beta \wn V \cdot \wn u + \widehat{\Delta} u) \\
&= - \beta^{-1} \nabla^\star \Phi.
\end{align*}
Since $u \in {\mathcal C}^\infty$, we have $\Phi \in ({\mathcal C}^\infty)^n$. Furthermore, in view of~\eqref{eq:estim_wnu}, we have $\Phi \in (L^2(\psi))^n$. We are thus in position to use~\eqref{eq:resu:olla} and get that
$$
\EE \left[ \sup_{0 \leq t \leq T} \left| \int_0^t L^{X^1_s} u(X_s) \, ds\right|^2 \right] \leq 8 T \beta^{-1} \int_{\R^n} \left| \wn u \right|^2 \psi.
$$
The estimate~\eqref{eq:bb} is then obtained as a direct consequence of the estimate~\eqref{eq:estim_wnu} on the solution $u$ to the Poisson problem.
\end{proof}


\section{A Gronwall-type result}
\label{sec:gronwall}

In this section, we state a general Gronwall-type result that will be crucial to prove Proposition~\ref{prop:main_global}. This section explains the role of the one-sided Lipschitz assumption~\eqref{eq:b_lip} as well as the integrability condition~\eqref{eq:hyp_alpha}.

\begin{lemma}
\label{lem:gronwall}
Consider a smooth function $b: \R \to \R$ satisfying the one-sided Lipschitz assumption~\eqref{eq:b_lip} with constant $L_b$ and the integrability condition~\eqref{eq:hyp_alpha} with constant $C_\alpha$. Let $X_t \in \R$ and $Y_t \in \R$ be the solutions to
\begin{align}
\label{eq:un}
d X_t &= -b(X_t) dt + \sigma dB_t \,,
\\
\label{eq:deux}
d Y_t &= -b(Y_t) dt + \sigma dB_t + f_t dt \, ,
\end{align}
for some time integrable stochastic process $f_t$ and some positive constant $\sigma$, where $B_t$ is a one-dimensional Brownian motion. We assume that there exists a probability measure $\varphi(x) \, dx$ on $\R$ which is invariant both for the dynamics~\eqref{eq:un} and~\eqref{eq:deux} and that $X_0=Y_0$ are distributed according to that measure.

Consider a fixed time interval $[0,t]$. We have 
\begin{equation}
\label{eq:resu}
\E \left[ \sup_{s \in [0,t]} |X_s-Y_s| \right]
\leq
C \, \sqrt{\EE \left[ \sup_{s \in [0,t]} |e_s|^2 \right]}
\end{equation}
where $\displaystyle e_s=\int_0^s f_\tau \, d\tau$ and $C$ is a constant only depending on $t$, $C_\alpha$ and $L_b$.
\end{lemma}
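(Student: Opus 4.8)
The plan is to introduce the difference $Z_s = X_s - Y_s$, which satisfies the (pathwise, random-ODE) equation $\dot Z_s = -\bigl(b(X_s) - b(Y_s)\bigr) - f_s$ with $Z_0 = 0$, since the noise terms $\sigma dB_s$ cancel exactly. Equivalently, writing $e_s = \int_0^s f_\tau\, d\tau$ and setting $U_s = Z_s + e_s$, one has $\dot U_s = -\bigl(b(X_s) - b(Y_s)\bigr)$, so that $U_s = -\int_0^s \bigl(b(X_\tau) - b(Y_\tau)\bigr)\, d\tau$. The first step is an energy estimate: multiply $\dot Z_s = -(b(X_s)-b(Y_s)) - f_s$ by $Z_s$ and integrate, or rather multiply $\dot U_s$-type relations appropriately. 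Using the one-sided Lipschitz condition~\eqref{eq:b_lip} in the form $-\bigl(b(X_s)-b(Y_s)\bigr)\bigl(X_s - Y_s\bigr) \le L_b (X_s-Y_s)^2$ together with a splitting $Z_s = U_s - e_s$, one gets $\frac{d}{ds}\bigl(\frac12 Z_s^2\bigr) \le L_b Z_s^2 + \text{(cross term involving } e_s\text{)}$, and a Gronwall argument as in Lemma~\ref{lem:main_global} will give a bound of the form $\sup_{s\in[0,t]} |Z_s| \le C(t,L_b) \sup_{s\in[0,t]} |e_s|$ — but only if one can close the estimate, which requires controlling the increment $|b(X_s) - b(Y_s)|$ by $\alpha(\max(|X_s|,|Y_s|))\, |X_s - Y_s|$ using the mean value theorem and the definition~\eqref{eq:def_alpha_pre} of $\alpha$.

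The crux — and what distinguishes this from Lemma~\ref{lem:main_global}, which only needed~\eqref{eq:b_lip} — is that to promote an $L^2$-in-time control of $e$ (coming from $\int_0^t f$, whose $L^\infty$ norm we control via Proposition~\ref{prop:bb}) into an $L^\infty$-in-path control of $Z$, one uses the one-sided Lipschitz bound to get contractivity on the "stable" side and then must absorb the bad behavior when $b'$ is large. The natural route: from $\frac{d}{ds}\bigl(\tfrac12 Z_s^2\bigr) = -\bigl(b(X_s)-b(Y_s)\bigr)Z_s - f_s Z_s$ and~\eqref{eq:b_lip}, integrate by parts the term $\int_0^s f_\tau Z_\tau\, d\tau = e_s Z_s - \int_0^s e_\tau \dot Z_\tau\, d\tau$, where $\dot Z_\tau = -(b(X_\tau)-b(Y_\tau)) - f_\tau$. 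This reintroduces $b(X_\tau)-b(Y_\tau)$, now multiplied by $e_\tau$, and here one bounds $|b(X_\tau) - b(Y_\tau)| \le \alpha\bigl(\max(|X_\tau|,|Y_\tau|)\bigr) |Z_\tau| \le \bigl(\alpha(|X_\tau|) + \alpha(|Y_\tau|)\bigr)|Z_\tau|$, using that $\alpha$ is nondecreasing. One then has, pointwise in $s\le t$,
\[
\tfrac12 Z_s^2 \le L_b \int_0^s Z_\tau^2\, d\tau + |e_s||Z_s| + \int_0^s |e_\tau|\bigl(\alpha(|X_\tau|)+\alpha(|Y_\tau|)\bigr)|Z_\tau|\, d\tau + \int_0^s |e_\tau|\,|f_\tau|\, d\tau,
\]
and after a Young inequality on $|e_s||Z_s|$ and on the $\alpha$-weighted term, a Gronwall inequality with the (random, time-integrable) coefficient $L_b + \bigl(\alpha(|X_\tau|)+\alpha(|Y_\tau|)\bigr)^2$ yields $\sup_{s\le t} Z_s^2 \le C \bigl(\sup_{s\le t}|e_s|^2\bigr)\exp\bigl(C\int_0^t (1 + \alpha(|X_\tau|)^2 + \alpha(|Y_\tau|)^2)\, d\tau\bigr)$ (the last term $\int |e||f|$ being handled by noting $\int_0^t |f_\tau|\,d\tau$ is itself controlled, or by keeping it inside $e$ via integration by parts once more).

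The main obstacle is then taking expectations: the exponential of the random Gronwall factor must have finite expectation, or better, one needs $\E\bigl[\sup_{s\le t}|Z_s|\bigr] \le C\sqrt{\E[\sup_{s\le t}|e_s|^2]}$ with $C$ depending only on $t, C_\alpha, L_b$, not on higher moments. The key input is that $X_\tau$ (and $Y_\tau$, since $\varphi$ is invariant for both dynamics and $X_0 = Y_0 \sim \varphi$) is distributed according to $\varphi\, d\tau$ for every $\tau$, so $\E\bigl[\int_0^t \alpha(|X_\tau|)^2\, d\tau\bigr] = t\, C_\alpha$ by~\eqref{eq:hyp_alpha}. To avoid an exponential-moment requirement, I expect the actual argument to be more delicate: one bounds $|X_\tau - Y_\tau| \le |X_\tau - X_0| + |X_0 - Y_0| + \cdots$ crudely, or — more likely — one works directly with $\sup_{s\le t}|Z_s|$ and uses that along any path, $|Z_s| \le |e_s| + \int_0^s |b(X_\tau)-b(Y_\tau)|\,d\tau$, then $\sup_{s\le t}|Z_s| \le \sup_{s\le t}|e_s| + \int_0^t \bigl(\alpha(|X_\tau|)+\alpha(|Y_\tau|)\bigr)\sup_{\sigma\le\tau}|Z_\sigma|\, d\tau$, whence by the (pathwise, deterministic) Gronwall lemma $\sup_{s\le t}|Z_s| \le \sup_{s\le t}|e_s|\,\exp\bigl(\int_0^t (\alpha(|X_\tau|)+\alpha(|Y_\tau|))\,d\tau\bigr)$. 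This still has an exponential, so to get the clean $L^2$-to-$L^1$ bound one must instead use the one-sided Lipschitz structure to kill the growth on average, iterating the energy estimate — I expect this is where~\eqref{eq:b_lip} is genuinely needed (not just~\eqref{eq:hyp_alpha}) and where the Cauchy–Schwarz step turning $\E[\alpha(|X|)\sup|Z|]$ into $\sqrt{C_\alpha}\,\sqrt{\E[\sup|Z|^2]}$ is applied, followed by reabsorption since $\E[\sup|Z|^2] < \infty$ a priori (which itself follows from the weaker Lemma~\ref{lem:main_global}-type bound with $f_t$ here being the abstract process). So the final step is: establish $\E[\sup|Z|^2] < \infty$ first, then run the refined estimate producing $\E[\sup|Z|] \le C\sqrt{C_\alpha}\sqrt{\E[\sup|Z|^2]}^{1/2}\sqrt{\E[\sup|e|^2]}^{1/2} + \cdots$, and bootstrap. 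The bookkeeping to get the constant to depend only on $t, C_\alpha, L_b$ is the real work.
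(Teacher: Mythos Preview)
Your proposal has a genuine gap: you correctly identify the obstacle---the pathwise Gronwall estimate $\sup_{s\le t}|X_s-Y_s| \le \sup_{s\le t}|e_s|\,\exp\bigl(\int_0^t (\alpha(|X_\tau|)+\alpha(|Y_\tau|))\,d\tau\bigr)$ demands exponential moments of $\int_0^t \alpha$, far stronger than~\eqref{eq:hyp_alpha}---but your suggested bootstrap does not close. Knowing $\E[\sup|X-Y|^2]<\infty$ and writing $\E[\sup|X-Y|] \le C\sqrt{C_\alpha}\,\sqrt{\E[\sup|X-Y|^2]}^{1/2}\sqrt{\E[\sup|e|^2]}^{1/2}$ does not yield $\E[\sup|X-Y|]\le C\sqrt{\E[\sup|e|^2]}$: the factor $\sqrt{\E[\sup|X-Y|^2]}$ need not be comparable to $\sqrt{\E[\sup|e|^2]}$ with a constant depending only on $t,C_\alpha,L_b$. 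More fundamentally, your energy estimates on $Z_s=X_s-Y_s$ always produce a Gronwall coefficient that contains the random quantity $\alpha(|X_\tau|)+\alpha(|Y_\tau|)$, and once that sits inside an exponential you cannot get away with just second moments.

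The paper avoids this by a different change of variable and a case split. Set $Z_t = Y_t - e_t$, so that $d|X_t-Z_t| = \bigl(b(Z_t+e_t)-b(X_t)\bigr)\,\mathrm{sgn}(X_t-Z_t)\,dt$, and split according to whether $|X_t-Z_t|\ge |e_t|$ or $|X_t-Z_t|<|e_t|$. In the first case the signs of $X_t-Z_t$ and $X_t-Z_t-e_t=X_t-Y_t$ agree, so the one-sided Lipschitz condition~\eqref{eq:b_lip} applies directly and gives the bound $L_b(|X_t-Z_t|+|e_t|)$. In the second case one uses the mean-value bound $|b(X_t)-b(Y_t)|\le(\alpha(|X_t|)+\alpha(|Y_t|))|X_t-Y_t|$, but now $|X_t-Y_t|=|X_t-Z_t-e_t|\le 2|e_t|$, so the contribution is $2(\alpha(|X_t|)+\alpha(|Y_t|))\,|e_t|$---linear in $|e_t|$, \emph{not} in $|X_t-Z_t|$. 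The resulting differential inequality has the form
\[
d|X_t-Z_t| \le L_b\,|X_t-Z_t|\,dt + 2\bigl(\alpha(|X_t|)+\alpha(|Y_t|)+L_b/2\bigr)\,|e_t|\,dt,
\]
with \emph{deterministic} Gronwall coefficient $L_b$. Hence $\sup_{s\le t}|X_s-Z_s|\le 2e^{L_b t}\bigl(\sup_{s\le t}|e_s|\bigr)\int_0^t(\alpha(|X_s|)+\alpha(|Y_s|)+L_b/2)\,ds$, and Cauchy--Schwarz in expectation together with stationarity of $X$ and $Y$ uses exactly $\E[\alpha(|X_s|)^2]=\E[\alpha(|Y_s|)^2]=C_\alpha$. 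The idea you are missing is precisely this shift-and-split, which moves the random $\alpha$-factor from the Gronwall coefficient into the inhomogeneity.
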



Let us comment on this result. First, using the one-sided assumption~\eqref{eq:b_lip}, one can check that $d |X_t-Y_t| \le \left( L_b |X_t - Y_t| \, + |f_t| \right) \, dt$, which yields, by the Gronwall lemma, an estimate of the difference $(|X_s-Y_s|)_{s \in [0,t]}$ in terms of $(|f_s|)_{s \in [0,t]}$. Here, we only assume a control on $\dps \left(\left|\int_0^s f_{\tau} d\tau\right|\right)_{s \in [0,t]}$. This is why the standard approach does not apply, and why we need Assumption~\eqref{eq:hyp_alpha} in addition to the one-sided Lipschitz assumption~\eqref{eq:b_lip}. 

Second, if $b$ is assumed to be globally Lipschitz with constant $L_b$ (instead of one-sided Lipschitz), one can check that $\dps |X_t-Y_t| \le L_b \int_0^t |X_s - Y_s| \, ds + |e_t|$ and, again by Gronwall lemma, $\dps \sup_{s \in [0,t]} |X_s-Y_s| \le e^{L_b t} \sup_{s \in [0,t]} |e_s|$. Here, we do not assume $b$ Lipschitz but only one-sided Lipschitz, and we were not able to prove such a pathwise inequality under the only assumption~\eqref{eq:b_lip}. Actually, we can prove a similar inequality but only in expectation, see~\eqref{eq:resu}, and under the additional assumption~\eqref{eq:hyp_alpha}.

\medskip

Concerning the dependency of the constant $C$ in~\eqref{eq:resu} on the time $t$, one can check from the proof that $C=1+\widetilde{C} t \exp(L_b t)$ where $\widetilde{C}$ is a constant only depending on $C_\alpha$ and $L_b$. Notice that if $b$ is assumed to be increasing, one can take $L_b=0$, so that $C=1+\widetilde{C} t$.

\medskip

Before proving Lemma~\ref{lem:gronwall}, let us comment on Assumption~\eqref{eq:hyp_alpha}, that reads, we recall, 
$$ 
C_\alpha(\beta) 
= 
\int_\R \left( \sup_{s \in [-|x|,|x|]} |b'(s)| \right)^2 \, \varphi(x) \, dx
< \infty. 
$$

\begin{remark}
\label{rem:alpha1}
It is not enough to assume 
\begin{equation}
\label{eq:integ2}
\int_\R |b'(x)|^2 \, \varphi(x) \, dx < \infty
\end{equation}
for~\eqref{eq:hyp_alpha} to hold. Consider indeed the case of
$
\varphi(x) = (1+|x|^2)^{-1}
$
and $b'=0$ on $\R$, except in $[n,n+1/n^2]$, where $|b'|=n$, for any $n \in \N^\star$. Then
$$
\int_\R |b'(x)|^2 \, \varphi(x) \, dx
\leq
\sum_{n \geq 1} n^2 (1+|n|^2)^{-1} \frac{1}{n^2} < \infty,
$$
and~\eqref{eq:integ2} holds. But, for any $x$ such that $n<|x| < n+1$, $\dps \sup_{s \in [-|x|,|x|]} |b'(s)| = n$, thus
$$
\int_\R \left( \sup_{s \in [-|x|,|x|]} |b'(s)| \right)^2 \, \varphi(x) \, dx
\geq
2 \sum_{n \geq 0} n^2 \frac{1}{1+|n+1|^2} = \infty
$$
and~\eqref{eq:hyp_alpha} does not hold.
\end{remark}

\begin{remark}
\label{rem:alpha2}
Rather than assuming~\eqref{eq:hyp_alpha}, another possibility is to assume that $b'$ satisfies~\eqref{eq:integ2} and that there exists $C$ such that, for any $x$ and $y$,
$$
\sup_{\theta \in (x,y)} \left| b'(\theta) \right| \leq C \Big( \left| b'(x) \right| + \left| b'(y) \right| \Big).
$$
Then~\eqref{eq:resu} again holds.
\end{remark}

\begin{proof}[Proof of Lemma~\ref{lem:gronwall}]
As an obvious consequence of the assumptions, we see that, at any time $t$, $X_t$ and $Y_t$ share the same probability law $\varphi(x) \, dx$, independent of $t$. Let $Z_t = Y_t - e_t$. We infer from~\eqref{eq:un} and~\eqref{eq:deux} that
\begin{align}
& d |X_t - Z_t|
\nonumber
\\
&=
( b(Z_t + e_t) - b(X_t) ) \, \sgn(X_t-Z_t) dt
\nonumber
\\
&=
\left( 1_{|X_t - Z_t| < |e_t|} + 1_{|X_t - Z_t| \geq |e_t|} \right) ( b(Z_t + e_t) - b(X_t) ) \, \sgn(X_t-Z_t) dt.
\label{eq:sor0}
\end{align}
To control the second term in the above right-hand side (corresponding to the case $|X_t - Z_t| \geq |e_t|$), we argue as follows:
\begin{itemize}
\item If $X_t \geq Z_t$, then it means that $X_t - Z_t \geq |e_t| \geq e_t$, hence $X_t \geq Z_t + e_t$, hence, using~\eqref{eq:b_lip}, we have $b(Z_t+e_t) - b(X_t) \leq L_b(X_t - Z_t- e_t)$. Therefore,
\begin{align}
&
1_{X_t \geq Z_t} 1_{|X_t - Z_t| \geq |e_t|} ( b(Z_t + e_t) - b(X_t)) \, \sgn(X_t-Z_t) 
\nonumber
\\
&=
( b(Z_t + e_t) - b(X_t) ) 1_{X_t - Z_t \geq |e_t|}
\nonumber
\\
& \leq
L_b(X_t - Z_t- e_t) 1_{X_t - Z_t \geq |e_t|}
\nonumber
\\
& \leq
L_b (|X_t - Z_t| + |e_t|).
\label{eq:sor1}
\end{align}
\item If $X_t \leq Z_t$, then it means that $X_t - Z_t \leq - |e_t| \leq e_t$, hence $X_t \leq Z_t + e_t$, hence, using~\eqref{eq:b_lip}, we get $b(Z_t+e_t) - b(X_t) \geq L_b (X_t- Z_t-e_t)$. Therefore,
\begin{align}
&1_{X_t \leq Z_t} 1_{|X_t - Z_t| \geq |e_t|} ( b(Z_t + e_t) - b(X_t) ) \, \sgn(X_t-Z_t) 
\nonumber
\\
&=- ( b(Z_t + e_t) - b(X_t)) 1_{X_t - Z_t \leq - |e_t|}
\nonumber
\\
& \leq - L_b(X_t - Z_t- e_t) 1_{X_t - Z_t \leq - |e_t|}
\nonumber
\\
&\leq L_b (|X_t - Z_t| + |e_t|).
\label{eq:sor2}
\end{align}
\end{itemize}
Collecting~\eqref{eq:sor0}, \eqref{eq:sor1} and~\eqref{eq:sor2}, we have
\begin{equation}
\label{eq:titi1}
d |X_t - Z_t|
\le
1_{|X_t - Z_t|< |e_t|} | b(X_t) - b(Z_t + e_t) | dt + L_b (|X_t - Z_t| + |e_t|) dt.
\end{equation}
To proceed, we write
$$
\left| b(X_t) - b(Z_t + e_t) \right| 
=
\left| b(X_t) - b(Y_t) \right| 
= 
\left| X_t - Y_t \right| \, \left| b'(\theta_t) \right|
$$
for some $\theta_t \in (X_t,Y_t)$. Using the function $\alpha$ defined by~\eqref{eq:def_alpha_pre}, we obtain
$$
\left| b(X_t) - b(Z_t + e_t) \right| 
\leq 
\left| X_t - Y_t \right| \, \left( \alpha(|X_t|) + \alpha(|Y_t|) \right).
$$
Inserting this relation in~\eqref{eq:titi1}, we obtain
\begin{align*}
&d |X_t - Z_t|
\\
&\le
1_{|X_t - Z_t|< |e_t|} \left| X_t - Y_t \right| \, \left( \alpha(|X_t|) + \alpha(|Y_t|) \right) \, dt + L_b (|X_t - Z_t| + |e_t|) dt
\\
&=
1_{|X_t - Z_t|< |e_t|} \left| X_t - Z_t - e_t \right| \, \left( \alpha(|X_t|) + \alpha(|Y_t|) \right) \, dt + L_b (|X_t - Z_t| + |e_t|) dt
\\
&\le
2 \left( \alpha(|X_t|) + \alpha(|Y_t|) + L_b/2 \right) \, |e_t| \, dt + L_b |X_t - Z_t| dt.
\end{align*}
We thus obtain
$$
d \Big[ \exp(-L_bt) |X_t - Z_t| \Big]
\le
2 \exp(-L_bt) \left( \alpha(|X_t|) + \alpha(|Y_t|) + L_b/2 \right) \, |e_t| \, dt.
$$
We now integrate in time, and use the fact that $X_0 = Z_0$:
\begin{align*}
\exp(-L_bt) |X_t-Z_t|
&\le
2 \int_0^t \exp(-L_bs) \left( \alpha(|X_s|) + \alpha(|Y_s|) +L_b/2 \right) \, |e_s| \, ds
\\
&\le
2 \left( \sup_{ s \in [0,t] } |e_s| \right) \int_0^t \left( \alpha(|X_s|) + \alpha(|Y_s|) + L_b/2 \right) \, ds.
\end{align*}
We deduce that
\begin{equation}
\label{eq:holder}
\sup_{s \in [0,t]} |X_s-Z_s|
\le 
2 \exp(L_bt) \left( \sup_{ s \in [0,t] } |e_s| \right) \int_0^t \left( \alpha(|X_s|) + \alpha(|Y_s|) + L_b/2 \right) \, ds,
\end{equation}
thus
\begin{align*}
&
\E \left[ \sup_{s \in [0,t]} |X_s-Z_s| \right]
\\
&\le
2 \exp(L_bt) \sqrt{\E \left[ \sup_{ s \in [0,t] } |e_s|^2 \right]} \ \sqrt{ \E \left[ \int_0^t \left( \alpha(|X_s|) + \alpha(|Y_s|) + L_b/2\right) \, ds \right]^2 }
\\
&\le
2 \exp(L_bt) \eta \sqrt{t} \ \sqrt{ \E \left[ \int_0^t \Big( \alpha(|X_s|) + \alpha(|Y_s|) + L_b/2\Big)^2 \, ds \right] }
\\
&\le
2 \exp(L_bt) \eta \sqrt{3t} \ \sqrt{ \E \left[ \int_0^t \Big[ \Big( \alpha(|X_s|) \Big)^2 + \Big( \alpha(|Y_s|) \Big)^2 + \frac{L_b^2}{4} \Big] \, ds \right]}
\end{align*}
where $\eta=\sqrt{\E \left[ \sup_{ s \in [0,t] } |e_s|^2 \right]}$. Since the law of $X_s$ and $Y_s$ is the same and does not depend on time, $\dps \E\left[ \alpha(|X_s|)^2\right]=\E\left[ \alpha(|Y_s|)^2\right]=\E\left[ \alpha(|X_0|)^2\right]$ so that
%
\begin{equation}
\label{eq:titi2}
\E \left[ \sup_{s \in [0,t]} |X_s-Z_s| \right]
\leq 
\widetilde{C} t \exp(L_b t) \eta,
\end{equation}
where $\widetilde{C}$ only depends on $C_\alpha$ and $L_b$. We eventually write that 
\begin{align*}
\E \left[ \sup_{s \in [0,t]} |X_s-Y_s| \right]
&=
\E \left[ \sup_{s \in [0,t]} |X_s-Z_s-e_s| \right]
\\
&\leq
\E \left[ \sup_{s \in [0,t]} |X_s-Z_s| \right]
+
\E \left[ \sup_{s \in [0,t]} |e_s| \right].
\end{align*}
From~\eqref{eq:titi2} and the above bound, we deduce the claimed bound~\eqref{eq:resu}.
\end{proof}

Referring to Remark~\ref{rem:holder}, if we assume~\eqref{eq:hyp_alpha_p} for some $1 \leq p < 2$ rather than~\eqref{eq:hyp_alpha}, the above proof can be modified to prove
\begin{equation}
\label{eq:resu_p}
\E \left[ \sup_{s \in [0,t]} |X_s-Y_s|^p \right]
\leq
C \left(\E \left[ \sup_{ s \in [0,t] } |e_s|^2 \right]\right)^{p/2}
\end{equation}
instead of~\eqref{eq:resu}. Indeed, from~\eqref{eq:holder}, we deduce that
$$
\sup_{s \in [0,t]} |X_s-Z_s|^p
\le 
2^p \exp(p L_b t) \left( \sup_{ s \in [0,t] } |e_s|^p \right) \left( \int_0^t \left( \alpha(|X_s|) + \alpha(|Y_s|) + L_b/2 \right) \, ds \right)^p,
$$
thus, using H\"older inequality with exponents $2/p$ and $2/(2-p)$, we get
\begin{align*}
&
\E \left[ \sup_{s \in [0,t]} |X_s-Z_s|^p \right]
\\
&\le
C_1 \left( \E \left[ \sup_{ s \in [0,t] } |e_s|^2 \right] \right)^{p/2} \ \left( \E \left[ \int_0^t \left( \alpha(|X_s|) + \alpha(|Y_s|) + L_b/2\right) \, ds \right]^{2p/(2-p)} \right)^{1-p/2}
\\
&\le
C_2 \eta^p \ \left( \E \left[ \int_0^t \Big[ \Big( \alpha(|X_s|) \Big)^{2p/(2-p)} + \Big( \alpha(|Y_s|) \Big)^{2p/(2-p)} + 1 \Big] \, ds \right] \right)^{1-p/2},
\end{align*}
where, as above, $\eta=\sqrt{\E \left[ \sup_{ s \in [0,t] } |e_s|^2 \right]}$ and $C_1$ and $C_2$ are constants only depending on $p$, $t$ and $L_b$. Since the law of $X_s$ and $Y_s$ is the same and does not depend on time, 
$$
\E\left[ \alpha(|X_s|)^{2p/(2-p)}\right] = \E\left[ \alpha(|Y_s|)^{2p/(2-p)}\right] = \E\left[ \alpha(|X_0|)^{2p/(2-p)}\right] = C_{\alpha,p} < \infty
$$
in view of~\eqref{eq:hyp_alpha_p}. We hence have
$\dps \E \left[ \sup_{s \in [0,t]} |X_s-Z_s|^p \right]
\leq 
C \eta^p,
$
where $C$ only depends on $p$, $t$, $C_{\alpha,p}$ and $L_b$. From this estimate and the fact that $X_s-Y_s=X_s-Z_s-e_s$, we deduce~\eqref{eq:resu_p}. 


\section{Proof of Proposition~\ref{prop:main_global}}\label{sec:conc}

In this section, we complete the proof of the error estimate~\eqref{eq:main_result} between the effective dynamics and the original dynamics, by combining the estimate on the last term in~\eqref{eq:diff} obtained in Section~\ref{sec:LZ} (namely~\eqref{eq:bb}) together with the Gronwall-type argument of Section~\ref{sec:gronwall} (i.e. estimate~\eqref{eq:resu}).

Recall that the exact dynamics satisfies (see~\eqref{eq:X1_exact})
$$
X^1_t - X^1_0 = - \int_0^t b(X^1_s) \, ds + \sqrt{2 \beta^{-1}} \, W^1_t + \int_0^t L^{X^1_s} u(X_s) \, ds
$$
whereas the effective dynamics satisfies (see~\eqref{eq:X1_approx})
$$
\xi_t - X^1_0 = - \int_0^t b(\xi_s) \, ds + \sqrt{2 \beta^{-1}} \, W^1_t.
$$
We set
$$
f_s = L^{X^1_s} u(X_s), \quad e_t = \int_0^t L^{X^1_s} u(X_s) \, ds,
$$
and we are thus in the setting of Lemma~\ref{lem:gronwall}, where $\xi_t$ satisfies a dynamics of the form~\eqref{eq:un} while $X^1_t$ satisfies a dynamics of the form~\eqref{eq:deux}.

In view of the assumptions~\eqref{eq:b_lip},~\eqref{eq:hyp_alpha} and~\eqref{eq:equi}, we see that the assumptions of Lemma~\ref{lem:gronwall} are satisfied. The bound~\eqref{eq:resu} thus yields that
\begin{equation}
\label{eq:bound_error}
\E \left[ \sup_{s \in [0,T]} |\xi_s-X^1_s| \right] \leq C \eta,
\end{equation}
where 
$$
\eta^2 
= 
\EE \left[ \sup_{0 \leq t \leq T} \left| e_t \right|^2 \right]
=
\EE \left[ \sup_{0 \leq t \leq T} \left| \int_0^t L^{X^1_s} u(X_s) \, ds\right|^2 \right]
\leq 
8 T \beta \frac{\kappa^2}{\rho^2},
$$
the last inequality being the main output of Section~\ref{sec:LZ} (see~\eqref{eq:bb} in Proposition~\ref{prop:bb}). Inserting the above estimate in~\eqref{eq:bound_error}, we deduce the claimed bound~\eqref{eq:main_result}. This concludes the proof of Proposition~\ref{prop:main_global}.

\begin{remark}
\label{rem:holder2}
Following Remark~\ref{rem:holder}, we now replace the assumption~\eqref{eq:hyp_alpha} by the stronger assumption~\eqref{eq:hyp_alpha_p} for some $1 \leq p < 2$. Following the same lines as in the proof of Proposition~\ref{prop:main_global}, and using~\eqref{eq:resu_p} instead of~\eqref{eq:resu}, we obtain
\begin{equation}
\label{eq:bound_error_p}
\E \left[ \sup_{s \in [0,T]} |\xi_s-X^1_s|^p \right] \leq C \eta^p,
\end{equation}
where, in view of~\eqref{eq:bb}, we have $\dps \eta^2 \leq 8 T \beta \frac{\kappa^2}{\rho^2}$. Inserting this estimate in~\eqref{eq:bound_error_p}, we deduce~\eqref{eq:main_result_p}.
\end{remark}

\section{Application: a quantitative averaging result}\label{sec:homog}

In this section, as an application of the techniques presented above, we show how to obtain a quantitative result on the error introduced by an averaging principle. We would like to stress that our result holds without assuming that the effective drift function is Lipschitz, which is to the best of our knowledge the assumption made in similar quantitative results that have been previously obtained in the literature.

\medskip

Let us consider the stochastic differential equation
\begin{equation}\label{eq:Xeps}
\left\{
\begin{aligned}
dX^{\varepsilon,1}_t &= -\partial_1 V(X^\varepsilon_t) \, dt + \sqrt{2 \beta^{-1}} \, dW^1 _t
\\
dX^{\varepsilon,i}_t &= -\frac{1}{\eps} \, \partial_i V(X^\varepsilon_t) \, dt + \sqrt{\frac{2 \beta^{-1}}{\varepsilon}} \, dW^i_t \qquad \text{ for $i=2,\ldots,n$}
\end{aligned}
\right.
\end{equation}
where $X^{\varepsilon,i}_t$ denotes the $i$-th component of the vector $X^\varepsilon_t \in \R^n$ and $\varepsilon$ is a positive constant. The initial condition $X^\varepsilon_0=X_0$ is assumed to be independent of $\varepsilon$ for simplicity. We note that $d\mu = \psi(x) \, dx$ is again the invariant measure of~\eqref{eq:Xeps}, and that it is independent of $\eps$.

By the averaging principle (see for example~\cite{pavliotis-stuart-07}), it is expected that, in the limit $\varepsilon \to 0$, the process $(X^{\varepsilon,1}_t)_{t \ge 0}$ converges to the process $(\xi_t)_{t \ge0}$ satisfying
\begin{equation}\label{eq:dyn_eff_eps}
d\xi_t = -b(\xi_t) \, dt + \sqrt{2 \beta^{-1}} dW^1_t
\end{equation}
with initial condition $\xi_0=X^1_0$, and where $b$ is defined by~\eqref{eq:def_b}. Using the techniques presented above, we are able to prove the following convergence result:
\begin{proposition}\label{prop:homog}
Assume that~\eqref{eq:poincare},~\eqref{eq:bound_d12V},~\eqref{eq:b_lip} and~\eqref{eq:hyp_alpha} hold, and that the system starts at equilibrium:
\begin{equation}
\label{eq:equi_eps}
X_0 \sim \mu.
\end{equation}
Consider $(X_t)_{0 \le t \le T}$ solution to~\eqref{eq:Xeps} and $(\xi_t)_{0 \le t \le T}$ solution to~\eqref{eq:dyn_eff_eps} over a bounded time interval $[0,T]$. Then, there exists a constant $C$, that is independent of $\varepsilon$, $\rho$ and $\kappa$, and that only depends on $T$, $C_\alpha(\beta)$ and $L_b$, such that
\begin{equation}
\label{eq:main_result_eps}
\EE \left( \sup_{0 \leq t \leq T} \left| X^{\varepsilon,1}_t - \xi_t \right| \right) 
\leq 
C \sqrt{\beta \varepsilon} \ \frac{\kappa}{\rho}.
\end{equation}
\end{proposition}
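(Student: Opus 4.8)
The plan is to mimic the proof of Proposition~\ref{prop:main_global} almost verbatim, tracking the extra powers of $\eps$ that appear because the fast components of~\eqref{eq:Xeps} are accelerated by a factor $1/\eps$. First I would write the exact dynamics for $X^{\eps,1}_t$ in the spirit of~\eqref{eq:X1_exact}: for the same Poisson solution $u$ of~\eqref{eq:def_u_xi_prime} (which does not depend on $\eps$, since $L^\xi$ is unchanged), an It\^o expansion of $u(X^\eps_t)$ together with the fact that the generator acting on the fast variables carries a prefactor $1/\eps$ gives
\begin{equation}
\label{eq:Xeps_exact}
X^{\eps,1}_t - X^1_0 = -\int_0^t b(X^{\eps,1}_s)\,ds + \sqrt{2\beta^{-1}}\,W^1_t + \eps \int_0^t \frac{1}{\eps} L^{X^{\eps,1}_s} u(X^\eps_s)\,ds + \sqrt{2\beta\eps}\,\widehat{M}_t,
\end{equation}
where $\widehat{M}_t$ is a martingale coming from the noise on the fast coordinates against $\wn u$; equivalently, after integrating, the error term is $\eps$ times a quantity of the type $\int_0^t \frac{1}{\eps}L^{X^{\eps,1}_s}u(X^\eps_s)\,ds$ plus a small martingale, so that $e_t := X^{\eps,1}_t - \xi_t$ plus the drift difference $\int_0^t(b(\xi_s)-b(X^{\eps,1}_s))\,ds$ equals the remainder.

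The main technical point is the analogue of Lemma~\ref{lem:landim_olla} and Proposition~\ref{prop:bb} for the dynamics~\eqref{eq:Xeps}. Here I would exploit that $\mu$ is still invariant and that~\eqref{eq:Xeps} is still reversible with respect to $\mu$ (the generator is $L^\eps = L^1 + \frac{1}{\eps}L^{\rm fast}$, each piece symmetric in $L^2(\psi)$), so the time-reversal argument of the proof of Lemma~\ref{lem:landim_olla} applies unchanged. Writing the remainder as $\int_0^t (L^\eps v)(X^\eps_s)\,ds$ for a suitable $v$ built from $u$, or more directly bounding $\E[\sup_{t\le T}|\int_0^t \frac{1}{\eps}L^{X^{\eps,1}_s}u(X^\eps_s)\,ds|^2]$ by the same forward--backward martingale decomposition, the quadratic variation that enters Doob's inequality is $\frac{1}{\eps^2}\cdot\eps \int_0^T |\wn u|^2(X^\eps_s)\,ds = \frac{1}{\eps}\int_0^T|\wn u|^2(X^\eps_s)\,ds$ (the $\eps$ in the numerator being the diffusion coefficient of the fast noise), so that in expectation one gets a bound of order $\frac{T}{\eps}\int_{\R^n}|\wn u|^2\psi \le \frac{T}{\eps}\beta^2\kappa^2/\rho^2$ by~\eqref{eq:estim_wnu}. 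Multiplying back by the $\eps^2$ coming from~\eqref{eq:Xeps_exact} (the error term is $\eps$ times this integral), one obtains
\begin{equation}
\label{eq:bound_error_eps}
\eta^2 := \E\left[\sup_{0\le t\le T}\left|X^{\eps,1}_t - \xi_t - \int_0^t\bigl(b(\xi_s)-b(X^{\eps,1}_s)\bigr)\,ds\right|^2\right] \le C\,T\,\beta\,\eps\,\frac{\kappa^2}{\rho^2},
\end{equation}
i.e.\ the same estimate as~\eqref{eq:bb} but with an extra factor $\eps$.

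Finally I would close the argument exactly as in Section~\ref{sec:conc}: the pair $(\xi_t, X^{\eps,1}_t)$ fits the template of Lemma~\ref{lem:gronwall}, with $\xi_t$ of the form~\eqref{eq:un}, $X^{\eps,1}_t$ of the form~\eqref{eq:deux}, $f_s$ the (appropriately rescaled) Poisson remainder, $e_t = \int_0^t f_s\,ds$, common invariant measure $\varphi(x)\,dx$ and common noise $\sqrt{2\beta^{-1}}\,W^1_t$; the hypotheses~\eqref{eq:b_lip},~\eqref{eq:hyp_alpha},~\eqref{eq:equi_eps} are exactly those required. Lemma~\ref{lem:gronwall} then gives $\E[\sup_{0\le t\le T}|X^{\eps,1}_t-\xi_t|] \le C\eta \le C\sqrt{\beta\eps}\,\kappa/\rho$ with $C$ depending only on $T$, $C_\alpha(\beta)$ and $L_b$, which is~\eqref{eq:main_result_eps}.

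The step I expect to be the main obstacle is the bookkeeping in~\eqref{eq:Xeps_exact}--\eqref{eq:bound_error_eps}: one must carefully isolate the $1/\eps$ prefactor in the fast part of the generator and the $\sqrt{\eps}$ (equivalently $\eps$ in the quadratic variation) coming from the fast noise, then check that the time-reversal/Doob estimate of Lemma~\ref{lem:landim_olla} still applies since only the \emph{fast} coordinates are involved in $\wn u$ while the operator $L^\eps$ is still $\mu$-symmetric. A minor additional check is that there is no leftover contribution from the slow noise $W^1_t$ in the It\^o expansion of $u(X^\eps_t)$, which is immediate because $u$ is built on the transverse variables only so $\partial_1$-derivatives enter only through the $\partial_1$ part of the drift, already accounted for in $b(X^{\eps,1}_s)$ via the defining equation $L^{\xi}u = f(\xi,\cdot)$; everything else is a transcription of the computations already carried out in Sections~\ref{sec:LZ}--\ref{sec:conc}.
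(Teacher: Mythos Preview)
Your overall architecture---reuse the same Poisson solution $u$, apply the forward--backward martingale (Lyons--Zhang) bound to the remainder, then close with Lemma~\ref{lem:gronwall}---is exactly the paper's. But the place you flagged as ``the main obstacle'' is indeed where your argument goes wrong.

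First, equation~\eqref{eq:Xeps_exact} is not an identity for $X^{\eps,1}_t$. The slow coordinate satisfies $dX^{\eps,1}_t = -\partial_1 V(X^\eps_t)\,dt + \sqrt{2\beta^{-1}}\,dW^1_t$, which gives, exactly as in~\eqref{eq:diff},
\[
X^{\eps,1}_t - \xi_t \;=\; \int_0^t \bigl(b(\xi_s)-b(X^{\eps,1}_s)\bigr)\,ds \;+\; \int_0^t f(X^\eps_s)\,ds,
\]
with no $\eps$ and no fast-noise martingale $\widehat{M}_t$ appearing at all. The factor $\eps$ must therefore come entirely from the estimate on $\int_0^t f(X^\eps_s)\,ds$, not from the equation itself.

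Second, your ``minor additional check'' is false: $u=u(\xi,x_2^n)$ genuinely depends on $\xi$ (the Poisson problem~\eqref{eq:def_u_xi_prime} is solved \emph{for each} $\xi$), so $\partial_1 u\neq 0$ in general. If you try to obtain the $\eps$ by It\^o-expanding $u(X^\eps_t)$ directly, you pick up the slow part $L^1 u=-\partial_1 V\,\partial_1 u+\beta^{-1}\partial_{11}u$ and a slow-noise martingale driven by $\partial_1 u$, none of which are controlled by the assumptions (only $\wn u$ is). Your quadratic-variation bookkeeping ``$\frac{1}{\eps^2}\cdot\eps\int|\wn u|^2$ then multiply back by $\eps^2$'' tacitly drops these terms.

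The paper gets the $\eps$ differently and cleanly: it reruns Lemma~\ref{lem:landim_olla} with the generator $L_\eps$ of~\eqref{eq:Xeps}. The resolvent equation $\eta w_{\eps,\eta}-L_\eps w_{\eps,\eta}=-\nabla^\star\Phi$ has the $\eps$-weighted Dirichlet form, and testing against $w_{\eps,\eta}$ yields
\[
\|\partial_1 w_{\eps,\eta}\|_{L^2(\psi)}^2 + \tfrac{1}{\eps}\|\wn w_{\eps,\eta}\|_{L^2(\psi)}^2 \;\le\; \beta^2\Bigl(\|\Phi_1\|_{L^2(\psi)}^2 + \eps\,\|\widehat\Phi\|_{L^2(\psi)}^2\Bigr).
\]
The forward--backward/Doob step produces exactly $8T\beta^{-1}\int\bigl[(\partial_1 w_{\eps,\eta})^2+\tfrac{1}{\eps}|\wn w_{\eps,\eta}|^2\bigr]\psi$, so the two $\eps$-weights cancel and one obtains
\[
\EE\Bigl[\sup_{0\le t\le T}\Bigl|\int_0^t \nabla^\star\Phi(X^\eps_s)\,ds\Bigr|^2\Bigr]\;\le\;8T\beta\Bigl(\|\Phi_1\|_{L^2(\psi)}^2+\eps\,\|\widehat\Phi\|_{L^2(\psi)}^2\Bigr).
\]
With $\Phi=(0,\wn u)$ the first term vanishes and~\eqref{eq:estim_wnu} gives the bound $8T\beta\eps\,\kappa^2/\rho^2$ on $\eta^2$. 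Your final Gronwall paragraph then applies verbatim. The point is that the $\eps$-gain is a property of the $L_\eps$-resolvent $w_{\eps,\eta}$, not of $u$.
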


\begin{proof}
The proof consists in exactly following the same arguments as for the proof of Proposition~\ref{prop:main_global}, keeping track of the dependency of the constants on~$\varepsilon$. Let us emphasize the modifications in the various steps of the proof.

The equation~\eqref{eq:diff} still holds in our context:
\begin{equation}
\label{eq:diff_eps}
X^{\varepsilon,1}_t - \xi_t  
= 
\int_0^t \left( b(\xi_s) - b(X^{\varepsilon,1}_s) \right) \, ds + \int_0^t L^{X^{\varepsilon,1}_s} u(X^{\varepsilon}_s) \, ds,
\end{equation}
where, as before, $u$ is the solution to the Poisson problem~\eqref{eq:def_u_xi} using the same definition~\eqref{eq:def_Lxi} for the operator $L^\xi$. In particular, $u$ does not depend on~$\varepsilon$. By using the results of Sections~\ref{sec:f} and~\ref{sec:poisson}, we have (see Lemma~\ref{lem:poisson}) that
\begin{equation}
\label{eq:estim_wnu_eps}
\int_{\R^n} \left| \wn u \right|^2 \psi \le \beta^2 \, \frac{\kappa^2}{\rho^2}.
\end{equation}

The infinitesimal generator of the process $(X^\varepsilon_t)_{t \ge 0}$ is $L_\varepsilon$ defined by: for any smooth function $v:\R^n \to \R$, 
$$
L_\varepsilon v= - \partial_1 V \, \partial_1 v + \beta^{-1} \partial_{11} v + \frac{1}{\varepsilon} L^\xi v,
$$
to be compared with~\eqref{eq:def_L}.

The main modification in the proof of Proposition~\ref{prop:main_global} is in Lemma~\ref{lem:landim_olla}. Now, the estimate~\eqref{eq:resu:olla} is the following. For any function $\Phi: \R^n \to \R^n$ in $({\mathcal C}^\infty \cap L^2(\psi))^n$, we have
\begin{equation}
\label{eq:resu:olla_eps}
\EE \left[ \sup_{0 \leq t \leq T} \left| \int_0^t \nabla^\star \Phi(X_s) \, ds \right|^2 \right] 
\leq 
8 T \beta \,\left(\| \Phi_1 \|^2_{L^2(\psi)}+  \varepsilon \sum_{i=2}^n\| \Phi_i \|^2_{L^2(\psi)} \right)
\end{equation}
where $\Phi_i$ denotes the $i$-th component of $\Phi$ and, as above, $\nabla^\star \Phi = \beta \nabla V \cdot \Phi - {\rm div}(\Phi)$. Let us give a few details on how~\eqref{eq:resu:olla_eps} is obtained, mimicking the proof of Lemma~\ref{lem:landim_olla}. The equation $\eta w_{\varepsilon,\eta} - L_\varepsilon w_{\varepsilon,\eta} = - \nabla^\star \Phi$ (compare with~\eqref{eq:poisson_L_prime_eta}) has a unique solution $w_{\varepsilon,\eta} \in H^1(\psi)$. The associated variational formulation is: for any $v \in H^1(\psi)$,
$$
\eta \int_{\R^n} w_{\varepsilon,\eta} \, v \, \psi + \beta^{-1} \int_{\R^n} \left(\partial_1 w_{\varepsilon,\eta} \, \partial_1 v + \frac{1}{\varepsilon} \sum_{i=2}^n \partial_i w_{\varepsilon,\eta} \, \partial_i v \right) \, \psi 
= 
- \int_{\R^n} \Phi \cdot \nabla v \, \psi.
$$
Taking $v = w_{\varepsilon,\eta}$, we obtain the a priori estimate (compare with~\eqref{eq:a_priori}--\eqref{eq:a_priori2})
\begin{equation}
\label{eq:olla2bis_eps}
\| \partial_1 w_{\varepsilon,\eta} \|^2_{L^2(\psi)} + \frac{1}{\varepsilon} \sum_{i=2}^n \| \partial_i w_{\varepsilon,\eta} \|^2_{L^2(\psi)} 
\leq 
\beta^2 \left( \| \Phi_1 \|^2_{L^2(\psi)} + \varepsilon \sum_{i=2}^n \| \Phi_i \|^2_{L^2(\psi)} \right)
\end{equation}
and
\begin{equation}
\label{eq:olla2bisbis_eps}
\lim_{\eta \to 0} \eta \| w_{\varepsilon,\eta} \|_{L^2(\psi)} = 0.
\end{equation}
Following the Step 2 of the proof of Lemma~\ref{lem:landim_olla}, we write, by It\^o's calculus, that
\begin{multline*}
w_{\varepsilon,\eta}(X^\varepsilon_t) - w_{\varepsilon,\eta}(X_0) 
= 
\int_0^t L_\varepsilon w_{\varepsilon,\eta}(X^\varepsilon_s) \, ds
\\
+ \sqrt{2 \beta^{-1}} \int_0^t \left( \partial_1 w_{\varepsilon,\eta} (X^\varepsilon_s) dW^1_s + \frac{1}{\sqrt{\varepsilon}} \sum_{i=2}^n \partial_i w_{\varepsilon,\eta} (X^\varepsilon_s) dW^i_s\right),
\end{multline*}
which is to be compared with~\eqref{eq:alpha1_}. The process $(X^\varepsilon_s)_{0 \le s \le T}$ is still reversible, and thus, one obtains, using~\eqref{eq:olla2bis_eps}, that
\begin{eqnarray*}
\EE \left[ \sup_{0 \leq t \leq T} 
\left| \int_0^t L_\varepsilon w_{\varepsilon,\eta} (X^\varepsilon_s) \, ds \right|^2 \right] 
& \leq &
8 \beta^{-1} T \int_{\R^n} \left( (\partial_1 w_\varepsilon)^2 + \frac{1}{\varepsilon}\sum_{i=2}^n  (\partial_i w_\varepsilon)^2\right)\, \psi
\\
& \leq &
8 T \beta \,\left(\| \Phi_1 \|^2_{L^2(\psi)}+  \varepsilon \sum_{i=2}^n\| \Phi_i \|^2_{L^2(\psi)} \right),
\end{eqnarray*}
to be compared with~\eqref{eq:olla2}. Using~\eqref{eq:olla2bisbis_eps}, we obtain~\eqref{eq:resu:olla_eps} as in the proof of Lemma~\ref{lem:landim_olla}.

Choosing $\Phi=(0,\wn u)$ in~\eqref{eq:resu:olla_eps} (as in the proof of Proposition~\ref{prop:bb}), one obtains, using~\eqref{eq:estim_wnu_eps},
$$
\EE \left[ \sup_{0 \leq t \leq T} \left| \int_0^t L^{X^{\varepsilon,1}_s} u(X^\varepsilon_s) \, ds\right|^2 \right]
\leq 
8 T \beta \varepsilon \frac{\kappa^2}{\rho^2},
$$
which is to be compared with~\eqref{eq:bb}. This estimates gives the magnitude of last term in~\eqref{eq:diff_eps}. The end of the proof follows exactly the same lines as for Proposition~\ref{prop:main_global}.
\end{proof}

Let us make three comments on the previous result. First, the stationarity assumption~\eqref{eq:equi_eps} can be weakened in a similar way as in Corollary~\ref{coro:main_global} above by using the same conditioning argument. 

Second, it is easy to generalize the previous result to the situation where each component of $(X_t)_{t \ge 0}$ is scaled in time with a parameter $\varepsilon_i$: for $i=2,\ldots,n$,
$$
dX^i_t = -\frac{1}{\eps_i} \partial_i V(X_t) \, dt + \sqrt{\frac{2\beta^{-1}}{\varepsilon_i}} \, dW^i_t
$$
while we keep, for the first component,
$$
dX^1_t = - \partial_1 V(X_t) \, dt + \sqrt{2\beta^{-1}} \, dW^1_t.
$$
In this case, one obtains a similar estimate as~\eqref{eq:main_result_eps}, which $\varepsilon$ being replaced by $\max(\varepsilon_2, \ldots, \varepsilon_n)$.

Third, we notice that using the simple approach of Section~\ref{sec:simple_csq} on the stochastic differential equation~\eqref{eq:Xeps}, one obtains the estimate~\eqref{eq:main_result_bis_global} with an upper bound independent of $\varepsilon$ (recall indeed that $b$, and thus $f$ defined by~\eqref{eq:def_f}, are independent of $\eps$). This shows the interest of the approach developed in Sections~\ref{sec:LZ} and~\ref{sec:gronwall}.

\section*{Acknowledgments} 

S. Olla acknowledges support by the ANR LSD. The work of F. Legoll and T. Leli\`evre is supported by the European Research Council under the European Union's Seventh Framework Programme (FP/2007-2013) / ERC Grant Agreement number 614492. T. Leli\`evre would like to thank Dirk Bl\"omker (Universit\"at Augsburg) for useful discussions on a preliminary version of this work.

\bibliography{eff_dyn_legoll_lelievre_olla.bib}

\begin{thebibliography}{10}

\bibitem{ABC-00}
C.~An\'e, S.~Blach\`ere, D.~Chafa\"i, P.~Foug\`eres, I.~Gentil, F.~Malrieu,
  C.~Roberto, and G.~Scheffer.
\newblock {\em Sur les in\'{e}galit\'{e}s de Sobolev logarithmiques}.
\newblock Soci\'{e}t\'{e} Math\'{e}matique de France, 2000.
\newblock In French.

\bibitem{e-vanden-eijnden-04}
W.~E and E.~Vanden-Eijnden.
\newblock Metastability, conformation dynamics, and transition pathways in
  complex systems.
\newblock In {\em Multiscale modelling and simulation}, volume~39 of {\em Lect.
  Notes Comput. Sci. Eng.}, pages 35--68. Springer, Berlin, 2004.

\bibitem{givon-kupferman-stuart-04}
D.~Givon, R.~Kupferman, and A.~Stuart.
\newblock Extracting macroscopic dynamics: model problems and algorithms.
\newblock {\em Nonlinearity}, 17(6):R55--R127, 2004.

\bibitem{grunewald-otto-villani-westdickenberg-09}
N.~Grunewald, F.~Otto, C.~Villani, and M.G. Westdickenberg.
\newblock A two-scale approach to logarithmic {S}obolev inequalities and the
  hydrodynamic limit.
\newblock {\em Ann. Inst. H. Poincar\'e Probab. Statist.}, 45(2):302--351,
  2009.

\bibitem{gyongy-86}
I.~Gyongy.
\newblock Mimicking the one-dimensional marginal distributions of processes
  having an {I}t{\^o} differential.
\newblock {\em Probab. Th. Rel. Fields}, 71:501--516, 1986.

\bibitem{hanggi-talkner-barkovec-90}
P.~H\"anggi, P.~Talkner, and M.~Borkovec.
\newblock Reaction-rate theory: fifty years after {K}ramers.
\newblock {\em Reviews of Modern Physics}, 62(2):251--342, 1990.

\bibitem{komorowski-landim-olla-12}
T.~Komorowski, C.~Landim, and S.~Olla.
\newblock {\em Fluctuations in Markov Processes, Time Symmetry and Martingale
  Approximation}, volume 345 of {\em Grundlheren der Mathematischen
  Wissenschaften}.
\newblock Springer, Berlin, New York, 2012.

\bibitem{legoll-lelievre-10}
F.~Legoll and T.~Leli{\`e}vre.
\newblock Effective dynamics using conditional expectations.
\newblock {\em Nonlinearity}, 23:2131--2163, 2010.

\bibitem{legoll-lelievre-12}
F.~Legoll and T.~Leli{\`e}vre.
\newblock Some remarks on free energy and coarse-graining.
\newblock In B.~Engquist, O.~Runborg, and R.~Tsai, editors, {\em Multiscale
  Modeling and Simulation in Science}, volume~82 of {\em Lecture Notes in
  Computational Science and Engineering}, pages 279--329. Springer, 2012.

\bibitem{lelievre-09}
T.~Leli{\`e}vre.
\newblock A general two-scale criteria for logarithmic {S}obolev inequalities.
\newblock {\em J. Funct. Anal.}, 256(7):2211--2221, 2009.

\bibitem{lelievre-rousset-stoltz-08}
T.~Leli\`evre, M.~Rousset, and G.~Stoltz.
\newblock Long-time convergence of an adaptive biasing force method.
\newblock {\em Nonlinearity}, 21:1155--1181, 2008.

\bibitem{lelievre-rousset-stoltz-book-10}
T.~Leli\`evre, M.~Rousset, and G.~Stoltz.
\newblock {\em Free energy computations: A mathematical perspective}.
\newblock Imperial College Press, 2010.

\bibitem{lyons-zhang-94}
T.J. Lyons and T.S. Zhang.
\newblock Decomposition of {D}irichlet processes and its application.
\newblock {\em The Annals of Probability}, 22(1):494--524, 1994.

\bibitem{maragliano-fischer-vanden-einjden-ciccotti-06}
L.~Maragliano, A.~Fischer, E.~Vanden-Eijnden, and G.~Ciccotti.
\newblock String method in collective variables: minimum free energy paths and
  isocommittor surfaces.
\newblock {\em J. Chem. Phys.}, 125:024106, 2006.

\bibitem{pavliotis-stuart-07}
G.A. Pavliotis and A.M. Stuart.
\newblock {\em Multiscale methods: averaging and homogenization}.
\newblock Springer, 2007.

\end{thebibliography}
\bibliographystyle{plain}

\end{document}